\newtheorem{theorem}{Theorem}[section]
\newtheorem{corollary}[theorem]{Corollary}
\newtheorem{lemma}[theorem]{Lemma}
\newcommand\g{{\mathfrak g}}
\newcommand{\R}{\mathbb{R}}
\begin{document}

{\bf \large
\centerline{N.~K.~Smolentsev}

\vspace{3mm}
\centerline{Canonical pseudo-K\"{a}hler structures }
\centerline{on six-dimensional nilpotent Lie groups\footnote{The work was partially supported by RFBR 12-01-00873-a and by Russian President Grant supporting scientific schools SS-544.2012.1
}}}

\vspace{3mm}

\begin{abstract}
In this paper we consider left-invariant pseudo-K\"{a}hler structures on six-dimensional nilpotent Lie algebras. The explicit expressions of the canonical complex structures are calculated, and the curvature properties of the associated pseudo-K\"{a}hler metrics are investigated.
It is proved that the associated pseudo-K\"{a}hler metric is Ricci-flat, that the curvature tensor has zero pseudo-Riemannian norm, and that the curvature tensor has some non-zero components that depend only on two or, at most, three parameters. The pseudo-K\"{a}hler structures obtained give basic models of pseudo-K\"{a}hler six-dimensional nilmanifolds.
\end{abstract}

\section{Preface} \label{Preface}

A left-invariant pseudo-K\"{a}hler (or indefinite K\"{a}hler) structure $(J, \omega)$ on a Lie group $G$ with Lie algebra $\g$ consists of a nondegenerate closed left-invariant 2-form $\omega$ and a left-invariant complex structure $J$ on $G$ which are \emph{compatible}, i.e. $\omega(JX, JY) = \omega(X, Y)$, for all $X, Y \in \g$.

Given a pseudo-K\"{a}hler structure $(J, \omega)$ on $\g$, there exists an associated nondegenerate symmetric 2-tensor $g$ on $\g$ defined by $g(X, Y) = \omega(X,JY)$ for $X, Y \in \g$. It is well-known \cite{BG} that if the Lie algebra $\g$ is nilpotent, then the associated metric $g$ for any compatible pair $(J, \omega)$ cannot be positive definite unless $\g$ is abelian. Therefore $g$ is a pseudo-Riemannian metric. We shall say that $g$ is a left-invariant pseudo-K\"{a}hler metric on the Lie group $G$.

A classification of left-invariant complex structures on six-dimensional nilpotent Lie groups is given in the work of Salamon \cite{Sal-1}.
The classification of left-invariant symplectic structures on six-dimensional nilpotent Lie groups is established in the paper by Goze, Khakimdjanov and Medina \cite{Goze-Khakim-Med}. In the paper by Cordero, Fern\'{a}ndez and Ugarte \cite{CFU2}, the pseudo-K\"{a}hler structures in a six-dimensional nilpotent Lie group are studied.
It is shown that for a pseudo-K\"{a}hler structure $(J, \omega)$ on a six-dimensional nilpotent Lie group, the complex structure $J$ will be nilpotent, and on some Lie groups abelian.

Let $\{e^1,\dots,e^n\}$ be a basis of left-invariant 1-forms on $G$. In the following theorem we write $\mathfrak{g}$ as an $n$-tuple $(0, 0, de^3, . . . , de^n)$, abbreviating $e^{ij}=e^ {i} \wedge e^ {j}$  further to $ij$. For example, the $n$-tuple $(0,0,0,0,12,34)$ designates a Lie algebra with the equations: $de^1=0$, $de^4=0$, $de^5 =e^1\wedge e^2$ and $de^6 =e^3\wedge e^4$. Besides, for each algebra its number in the classification list for symplectic Lie algebras given in \cite{Goze-Khakim-Med} is also specified.

\begin{theorem} [\cite {CFU2}] \label {PsKahl_6}
Let $\mathfrak {g}$ be a (nonabelian) six-dimensional nilpotent Lie algebra. Then $\mathfrak {g}$ possesses a compatible pair $(J, \omega)$ if and only if $\mathfrak {g}$ is isomorphic to one of the following Lie algebras:
$$
\begin{array}{ll}
  \g_{21} = (0, 0, 0, 0, 12, 14 + 25), & \g_{24} = (0, 0, 0, 0, 12, 34), \\
  \g_{14} = (0, 0, 0, 12, 13, 14), & \g_{17} = (0, 0, 0, 0, 12, 14 + 23), \\
  \g_{13} = (0, 0, 0, 12, 13, 14 + 23), & \g_{16}=(0, 0, 0, 0, 13 + 42, 14 + 23),\\
  \g_{15} = (0, 0, 0, 12, 13, 24), & \g_{23} = (0, 0, 0, 0, 12, 13), \\
  \g_{11} = (0, 0, 0, 12, 13 + 14, 24), & \g_{18} = (0, 0, 0, 12, 13, 23), \\
  \g_{10} = (0, 0, 0, 12, 14, 13 + 42), & \g_{25} = (0, 0, 0, 0, 0, 12), \\
  \g_{12} = (0, 0, 0, 12, 13 + 42, 14 + 23). &  \\
\end{array}
$$
\end{theorem}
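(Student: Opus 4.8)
The plan is to reformulate the existence of a compatible pair in purely complex-geometric terms and then to solve a linear system case by case over the normal forms of nilpotent complex structures. First I would record the elementary observation that, once a complex structure $J$ is fixed, compatibility $\w(JX,JY)=\w(X,Y)$ is precisely the condition that the $2$-form $\w$ be of type $(1,1)$ with respect to $J$. Thus a compatible pair on $\g$ is the same datum as a complex structure $J$ together with a real, closed, nondegenerate $(1,1)$-form. This decouples the problem into two tasks: (i) enumerate the admissible $J$, and (ii) for each, decide whether a closed nondegenerate $(1,1)$-form exists.

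For task (i) I would invoke the fact stated in the preface, that on a six-dimensional nilpotent Lie group any $J$ admitting a compatible symplectic form is nilpotent, together with Salamon's classification \cite{Sal-1}. This produces, up to isomorphism and up to the action of $GL(3,\Co)$ on the $(1,0)$-coframe, a finite list of families of structure equations of the form $d\w^1=0$, $d\w^2\in\Lambda^2\langle\w^1,\bar\w^1\rangle$, $d\w^3\in\Lambda^2\langle\w^1,\w^2,\bar\w^1,\bar\w^2\rangle$, the coefficients being constrained by $d^2=0$ (the Jacobi identity). This filtration is exactly what renders the ensuing computation finite and tractable.

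For task (ii), writing $\w=i\sum_{j,k}h_{j\bar k}\,\w^j\wedge\bar\w^k$ with $(h_{j\bar k})$ Hermitian, the condition $d\w=0$ becomes a linear system in the entries $h_{j\bar k}$ whose coefficients are the complex structure constants, while nondegeneracy is the single open condition $\det(h_{j\bar k})\neq0$. For each family from task (i) I would solve this linear system, determine whether a nondegenerate solution survives, and for the surviving families compute the underlying real Lie algebra and identify its isomorphism class against the standard list, cross-checking with the symplectic classification of \cite{Goze-Khakim-Med}. The ``if'' direction is then witnessed by exhibiting one explicit solution $(J,\w)$ per algebra (these are the canonical structures studied in the rest of the paper), while the ``only if'' direction follows by showing that every remaining nilpotent Lie algebra carrying a complex structure forces $\det(h_{j\bar k})=0$ for each nilpotent $J$ it admits, hence carries no compatible $\w$.

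The main obstacle I anticipate is neither of the conceptual reductions but the case analysis in task (ii): it must be stressed that admitting a complex structure and admitting a symplectic form \emph{separately} does not imply admitting a compatible pair, so one cannot simply intersect the two existing classification lists. The genuine work is to run the closedness-plus-nondegeneracy computation through every family of nilpotent $J$, while carefully tracking isomorphism classes so that no algebra is double-counted and none omitted; this bookkeeping, rather than any single hard inequality, is where the proof becomes laborious.
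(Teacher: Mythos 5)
Note first that the paper does not prove Theorem \ref{PsKahl_6} at all: it is imported verbatim from \cite{CFU2}, and the paper's own computations run in the opposite direction, taking each symplectic normal form of \cite{Goze-Khakim-Med} as given and solving the three conditions (compatibility, $J^2=-I$, $N_J=0$) for $J$. Your proposal --- fix $J$, describe compatible $\omega$ as a closed nondegenerate real $(1,1)$-form $\w=i\sum h_{j\bar k}\,\w^j\wedge\bar\w^k$, and solve the resulting linear system in the Hermitian matrix $(h_{j\bar k})$ with the open condition $\det(h_{j\bar k})\neq 0$ --- is in outline exactly the strategy of the original proof in \cite{CFU2}, and the two conceptual reductions you make (compatibility $\Leftrightarrow$ type $(1,1)$; closedness is linear in $h$) are both correct. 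The main caveat is your task (i): the fact that any $J$ admitting a compatible symplectic form on a six-dimensional nilpotent algebra must be \emph{nilpotent} is not a consequence of Salamon's classification \cite{Sal-1}; it is itself a theorem proved in \cite{CFU2}, and it is essential, since non-nilpotent complex structures on six-dimensional nilpotent algebras do exist and your filtered equations $d\w^3\in\Lambda^2\langle\w^1,\w^2,\bar\w^1,\bar\w^2\rangle$ do not cover them. So your argument is correct relative to that cited fact but not self-contained: a full proof of the ``only if'' direction must separately exclude compatible symplectic forms for non-nilpotent $J$. Your closing caution that one cannot simply intersect the symplectic and complex classification lists is well placed a priori; note, though, that the Corollary following the theorem says precisely that the intersection description (symplectic $\cap$ \emph{nilpotent} complex) is valid a posteriori --- it is a consequence of the classification, not a shortcut to it. As for the comparison of methods: your complex-side route linearizes the existence question and is the natural way to prove the classification theorem, whereas the paper's symplectic-side route, though computationally heavier (it is carried out in Maple case by case), is the one adapted to its actual goal of attaching canonical complex structures and curvature data to each symplectic normal form.
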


\begin{corollary} [\cite {CFU2}]
In dimension 6, the Lie algebra $\g$ has compatible pairs $(J,\omega)$ if and only if
it admits both symplectic and nilpotent complex structures.
\end{corollary}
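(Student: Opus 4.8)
The plan is to establish the two implications separately. The forward direction is immediate from the definitions: if $(J,\omega)$ is a compatible pair on $\g$, then $\omega$ is by hypothesis a nondegenerate closed $2$-form, i.e. a symplectic structure, while $J$ is a complex structure which, by the result of Cordero, Fern\'andez and Ugarte recalled in the preface, is necessarily nilpotent on a six-dimensional nilpotent Lie algebra. Hence $\g$ admits both a symplectic structure and a nilpotent complex structure, and this half needs no case analysis.

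The real content is the converse, and the difficulty here is conceptual: an arbitrary symplectic form $\omega_0$ and an arbitrary nilpotent complex structure $J_0$ on the same algebra need not satisfy $\omega_0(J_0X,J_0Y)=\omega_0(X,Y)$, so one cannot simply pair a given symplectic form with a given complex structure. Since compatibility is genuinely restrictive, I do not expect an abstract construction of a compatible pair from the two separate structures; instead I would reduce the statement to a comparison of explicit classifications. Let $\mathcal N$ denote the set of isomorphism classes of six-dimensional nilpotent Lie algebras carrying a nilpotent complex structure, read off from Salamon's classification \cite{Sal-1} together with the nilpotency criterion of \cite{CFU2}, and let $\mathcal S$ denote those carrying a symplectic structure, taken from Goze, Khakimdjanov and Medina \cite{Goze-Khakim-Med}. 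Both lists are explicit and finite.

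The crux is then to show that $\mathcal N \cap \mathcal S$ coincides exactly with the thirteen algebras listed in Theorem \ref{PsKahl_6}. One inclusion comes for free: by Theorem \ref{PsKahl_6} each of the thirteen algebras carries a compatible pair, so by the forward implication it lies in $\mathcal N \cap \mathcal S$. For the reverse inclusion I would run through the finitely many algebras of $\mathcal N$ and, for each one not already appearing in the thirteen-element list, verify that it admits no symplectic form (equivalently, traverse the symplectic algebras of $\mathcal S$ and test each for a nilpotent complex structure). I expect this finite but delicate case-check to be the main obstacle, since one must rule out every algebra that might acquire the two structures independently while failing to support a compatible pair. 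Once $\mathcal N \cap \mathcal S$ has been identified with precisely the thirteen algebras of Theorem \ref{PsKahl_6}, a final appeal to that theorem furnishes an actual compatible pair $(J,\omega)$ on each of them, which completes the converse and hence the corollary.
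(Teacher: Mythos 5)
Your proposal is correct and takes essentially the approach the paper intends: the forward implication is exactly the nilpotency result of Cordero--Fern\'andez--Ugarte quoted in the preface, and the converse reduces, as in \cite{CFU2}, to identifying the intersection of the symplectic list of \cite{Goze-Khakim-Med} with the list of algebras admitting nilpotent complex structures and then invoking Theorem \ref{PsKahl_6} to supply a compatible pair on each algebra of that intersection. The paper itself offers no proof beyond the citation to \cite{CFU2}, so the only caveat is that the finite list comparison, which you correctly isolate as the crux, remains an unexecuted (but well-posed and routine) verification in your write-up, just as it is in the paper.
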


For all the Lie algebras listed, the compatible complex structure $J$ is nilpotent, and for algebras $\g_{21}$, $\g_{12}$, $\g_{16}$ and $\g_{25}$ $J$ is abelian. For each Lie algebra in this list an example of a nilpotent complex structure is given in \cite{CFU2}, and the compatible symplectic forms for it are presented.

It is more natural to start with the classification list of Goze, Khakimdjanov and Medina \cite{Goze-Khakim-Med} in which all symplectic 6-dimensional Lie algebras are presented and where it is shown that each nilpotent Lie algebra is symplectoisomorphic to one of the algebras on this list.

Therefore we will consider the Lie algebras from theorem \ref{PsKahl_6} with the symplectic structure from the list in \cite{Goze-Khakim-Med} and from these we will search for all compatible complex structures.
Generally speaking, for a given symplectic structure $\omega$ there is a multiparametrical set of complex structures. For example, for the abelian group $\R^6$ with symplectic structure $\omega = e^1\wedge e^2 +e^3\wedge e^4 +e^5\wedge e^6$, there is a 12-parametrical set compatible with $\omega$ complex structures $J=(\psi_{ij})$.
However the curvature of the associated metric $g(X,Y)=\omega(X,JY)$ will be zero for any complex structure $J$. Therefore, from the geometrical point of view, there is no sense in considering the general compatible complex structure $J$. It is much more natural to choose the elementary one: $J(e_1)=e_2$,\ $J(e_3)=e_4$,\ $J(e_5)=e_6$. We will adhere to this point of view for all Lie algebras g.

After finding a multiparametrical set of complex structures $J=(\psi_{ij})$, compatible with the symplectic structure $\omega$ on $\g$, we will consider all free parameters $\psi_{ij}$ on which the curvature of the associated metric does not depend to be zero. We will call such structures \emph{canonical}.

We will obtain explicit expressions for the canonical complex structures, and we will investigate the curvature properties of the associated pseudo-K\"{a}hler metrics. We will prove that the associated pseudo-K\"{a}hler metric is Ricci-flat, that the scalar square $g(R,R)$ of the curvature tensor $R$ is equal to zero and that the curvature tensor has some non-zero components that depend only on two or, at the most, three parameters.

For some Lie groups in this paper, to find the compatible complex structure $J=(\psi_{ij})$ we will use the Magnin complex structures discovered in \cite{Mag-3}, and we will solve only the compatible condition $\omega(JX,JY)=\omega(X,Y)$. For the remaining Lie groups the compatible complex structure $J=(\psi_{ij})$ is a solution of three equations: the compatible condition, $\omega \circ J+J^t\circ \omega=0$; the condition of an almost complex structure, $J^2=-Id$; the integrability $N_J(X,Y)=0$.
First we solve a linear set of equations under the compatible condition. If there are difficulties in finding a solution to the equations for the integrability, we calculate a tensor of curvature of the associated metric of the almost complex structure, and we find the parameters on which it depends. Setting the remaining free parameters equal to zero, we solve the equations for the integrability, and we discover the required complex structure and the associated metric.

All computations are fulfilled in system of computer mathematics Maple.
Formulas for computations are specified in the end of this paper.

\section{Pseudo-K\"{a}hler structures on nilpotent Lie groups}
Let $G$ be a real Lie group of dimension $n$ with Lie algebra $\mathfrak{g}$.
In this paper we will study a left-invariant pseudo-K\"{a}hler structure $(J, \omega)$ on a Lie group $G$.
A left-invariant pseudo-K\"{a}hler (or indefinite K\"{a}hler) structure $(J, \omega)$ on a Lie group $G$ with Lie algebra $\g$ consists of a nondegenerate closed left-invariant 2-form $\omega$ and a left-invariant complex structure $J$ on $G$ which are \emph{compatible}, i.e. $\omega(JX, JY) = \omega(X, Y)$, for all $X, Y \in \g$. Given a pseudo-K\"{a}hler structure $(J, \omega)$ on $\g$, there exists an associated left-invariant pseudo-K\"{a}hler metric $g$ on $\g$ defined by
$$
g(X, Y) = \omega(X,JY), \mbox{ for } X, Y \in \g.
$$

As the symplectic structure $(J, \omega)$ and the complex structure $J$ are left-invariant on $G$, they are defined by their values on the Lie algebra $\g$. Therefore from now on we will deal only with the Lie algebra $\g$ and we will define $\omega$ and $J$ as the symplectic and, respectively, the complex structures on the Lie algebra $\g$.

An almost complex structure on a Lie algebra $\g$ is an endomorphism $J:\g \rightarrow \g$ satisfying $J^2 = -I$, where $I$ is the identity map. The integrability condition of a left-invariant almost complex structure $J$ on $G$ is expressed in terms of the Nijenhuis tensor $N_J$ on $\g$:
\begin{equation}\label{Nij1}
N_J(X,Y) = [JX, JY] - [X,Y] - J[JX,Y] - J[X, JY], \mbox{ for all } X, Y \in \g.
\end{equation}
An almost complex structure $J$ on $\g$ is called \emph{integrable} if $N_J \equiv 0$. In this case $J$ is called a \emph{complex structure} on $G$.

The descending central series $\{C^k({g})\}$ of $\mathfrak{g}$ is defined inductively by
$$
C^0({\g})={g},\quad C^k({\g})=[\,{\g},C^{k-1}({\g})],\ k>0.
$$
The Lie algebra $\mathfrak{g}$ is said to be \emph{nilpotent} if $C^k(\g) =0$ for some $k$.
The Lie group is said to be nilpotent if its Lie algebra is nilpotent.

If $\g$ is $s$-step nilpotent, then the ascending central series $\mathfrak{g}_0=\{0\}\subset \mathfrak{g}_1 \subset \mathfrak{g}_2\subset\dots \subset \mathfrak{g}_{s-1}\subset \mathfrak{g}_s =\mathfrak{g}$ of $\g$ is defined inductively by
\[
\mathfrak{g}_k=\{X\in \mathfrak{g} |\ [X,\mathfrak{g}]\subseteq  \mathfrak{g}_{k-1}\}, \ k\geq 1.
\]
It is well-known that the sequence $\{\g_k\}$ increases strictly until $\g_s$ and, in particular, that the ideal $\g_1$ is the center $\mathcal{Z}$ of Lie algebra $\g$.

Since the spaces $\{\g_k\}$ are not, in general, $J$-invariant, the sequence is not suitable for working with $J$. We introduce a new sequence $\{\mathfrak{a}_l(J)\}$ having the property of $J$-invariance \cite{CFGU01}. The ascending series $\{\mathfrak{a}_l(J)\}$ of the Lie algebra $\g$, compatible with the left-invariant complex structure $J$ on $G$, is defined inductively as follows: $\frak{a}_0(J) = \{0\}$,
$$
\mathfrak{a}_l(J) =\{X\in \mathfrak{g}\ |\ [X,\mathfrak{g}]\subseteq  \mathfrak{a}_{l-1}(J) \mbox { and } [JX,\mathfrak{g}]\subseteq  \mathfrak{a}_{l-1}(J) \},\ l\ge 1.
$$
It is clear that each $\mathfrak{a}_l(J)$ is a $J$-invariant ideal in $\g$ and $\mathfrak{a}_k(J) \subseteq \mathfrak{g}_{k}$ for $k\ge 1$. It must be noted that the terms $\mathfrak{a}_l(J)$ depend on the complex structure $J$ considered on $G$. Moreover, this ascending series, in spite of $\g$ being nilpotent, can stop without reaching the Lie algebra $\g$, that is, it may happen that $\mathfrak{a}_l(J) \neq \g$ for all $l$. The following definition is motivated by this fact.

The left-invariant complex structure $J$ on $G$ is called \emph{nilpotent} if there is a number $p$ such that $\mathfrak{a}_p(J)=\mathfrak{g}$.

It is obvious that the ideal $\mathfrak{a}_1(J)$ lies in the center $\mathcal{Z}$ of the Lie algebra $\g$. If the nilpotent Lie algebra has the two-dimensional center $\mathcal{Z}$ for any left-invariant complex nilpotent structure $J$, the ideal $\mathcal{Z}$ is $J$-invariant. If the nilpotent Lie algebra has an increasing central sequence of ideals $\mathfrak{g}_k$, $k=0,1,\dots, s$, for which the dimension increases each time by two units for any left-invariant complex nilpotent structure $J$, then the equalities $\mathfrak{g}_k =\frak{a}_k(J)$, $k=0,1,\dots, s$ are fulfilled.
If the basis of $\g$ is chosen so that $\mathfrak{g}_1 =\{e_{2n-1},\, e_{2n}\}$, $\mathfrak{g}_2 =\{e_{2n-3},\, e_{2n-2},\, e_{2n-1},\, e_{2n}\}$, \dots, then the complex structure $J$ has the following block type (for example, for a six-dimensional case):
\begin{equation} \label{J6Bl}
J_0=  \left( \begin {array}{cccccc} \psi_{11}&\psi_{12}&0&0&0&0 \\
\psi_{21}&\psi_{22}&0&0&0&0\\
\psi_{31}&\psi_{32}&\psi_{33}&\psi_{34} &0&0\\
\psi_{41}&\psi_{42}&\psi_{43}&\psi_{44} &0&0\\
\psi_{51}&\psi_{52}&\psi_{53}&\psi_{54} &\psi_{55}&\psi_{56}\\
\psi_{61}&\psi_{62}&\psi_{63}&\psi_{64} &\psi_{65}&\psi_{66}
\end {array} \right).
\end{equation}

The remaining parameters in (\ref{J6Bl}) are not free, because they are restricted by the integrability conditions $N_J=0$ and $J^2=-1$.

Let us establish some properties of a symplectic nilpotent Lie algebra $(\g,\omega)$ with a compatible almost complex structure $J$.

\begin{lemma} \label {C1ortZ}
Let $\mathcal{Z}$  be the Lie algebra center, then for any symplectic form $\omega$ on $\mathfrak{g}$, the equality $\omega (C^1\mathfrak {g}, \mathcal {Z}) =0$ is fulfilled
\end{lemma}

This follows at once from the formula
$d\omega(X,Y,Z)=\omega([X,Y],Z) -\omega([X,Z],Y) +\omega([Y,Z],X)=0$, \ $\forall X,Y\in\mathfrak{g}$, $\forall Z\in\mathcal{Z}$.

\begin {corollary}
If the ideal $C^k\g $, \, $k\ge 1$, is  $J$-invariant, then any vector $X\in\mathcal {Z} \cap C^k\g$ is isotropic for the associated metric $g(X, Y) = \omega(X,JY)$. In particular, the associated metric is pseudo-Riemannian.
\end {corollary}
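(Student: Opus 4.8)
The plan is to deduce the isotropy statement directly from Lemma~\ref{C1ortZ} and then read off indefiniteness. First I would evaluate the associated metric on the diagonal, $g(X,X)=\omega(X,JX)$, and check that the hypotheses force the two arguments into the pair of subspaces that Lemma~\ref{C1ortZ} declares $\omega$-orthogonal. Since the descending central series is decreasing, $C^k\g\subseteq C^1\g$ for every $k\ge 1$; because $C^k\g$ is assumed $J$-invariant and $X\in C^k\g$, we obtain $JX\in C^k\g\subseteq C^1\g$. At the same time $X\in\mathcal{Z}$. Lemma~\ref{C1ortZ} now gives $\omega(JX,X)=0$, and the antisymmetry of $\omega$ yields $g(X,X)=\omega(X,JX)=-\omega(JX,X)=0$. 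Hence every $X\in\mathcal{Z}\cap C^k\g$ is isotropic.

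For the concluding clause I would exhibit a nonzero isotropic vector and invoke the elementary fact that a nondegenerate symmetric form possessing one cannot be positive (or negative) definite. As $\g$ is nonabelian and nilpotent, its descending series terminates: $C^{s}\g=0$ while $C^{s-1}\g\ne 0$ for some $s\ge 2$. The identity $[C^{s-1}\g,\g]=C^{s}\g=0$ shows $C^{s-1}\g\subseteq\mathcal{Z}$, so $\mathcal{Z}\cap C^{s-1}\g=C^{s-1}\g\ne 0$, and since $s-1\ge 1$ this space also lies in $C^1\g$. The moment one such descending-series ideal meeting the center is $J$-invariant, the first part supplies a nonzero isotropic vector, and $g$ is forced to be indefinite, that is, a genuine pseudo-Riemannian metric.

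The delicate point, and the one I expect to be the main obstacle, is precisely this $J$-invariance: a term $C^k\g$ need not be $J$-stable a priori, so for the \emph{in particular} clause one must produce a $J$-invariant piece of $\g$ that simultaneously sits in the center and meets $C^1\g$. In the nilpotent-complex-structure setting this is resolved by passing to $\mathfrak{a}_1(J)=\mathcal{Z}\cap J\mathcal{Z}$, the largest $J$-invariant subspace of the center, which is nonzero exactly because $J$ is nilpotent; one then checks, for the algebras in Theorem~\ref{PsKahl_6}, that it meets $C^1\g$ nontrivially. The isotropy computation itself is entirely routine once Lemma~\ref{C1ortZ} is available, so the whole weight of the argument rests on locating this central, $J$-invariant vector inside the commutator ideal.
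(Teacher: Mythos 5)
Your first paragraph is exactly the paper's argument: the corollary is stated there without proof as an immediate consequence of Lemma~\ref{C1ortZ}, the point being precisely that $J$-invariance together with the inclusion $C^k\g\subseteq C^1\g$ puts $JX$ into $C^1\g$ while $X\in\mathcal{Z}$, so that $g(X,X)=\omega(X,JX)=-\omega(JX,X)=0$. The isotropy claim is therefore settled, and settled the same way the paper settles it.

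The problem is your third paragraph, which manufactures an obstacle that is not there and then leaves its resolution unfinished. The corollary is conditional: the $J$-invariance of $C^k\g$ is its \emph{hypothesis}, and the ``in particular'' clause inherits that hypothesis; you are not asked to locate a $J$-invariant subspace, one is handed to you. What must be checked is only that $\mathcal{Z}\cap C^k\g\neq 0$, so that isotropy is not vacuous --- and your own second paragraph already contains that check: $C^{s-1}\g$ is nonzero and central, and since the descending series is decreasing, $C^{s-1}\g\subseteq C^k\g$ for every $k$ with $C^k\g\neq 0$. Hence $\mathcal{Z}\cap C^k\g\supseteq C^{s-1}\g\neq 0$, and your first paragraph applies verbatim to any nonzero $X\in C^{s-1}\g$: the $J$-invariance invoked there is that of $C^k\g$, never of $C^{s-1}\g$ itself, so no further $J$-stability is required. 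The detour through $\mathfrak{a}_1(J)=\mathcal{Z}\cap J\mathcal{Z}$, nilpotency of $J$, and an unperformed case-by-case check against Theorem~\ref{PsKahl_6} is therefore unnecessary, and as written it is not a proof; note moreover that for $X\in\mathfrak{a}_1(J)$ Lemma~\ref{C1ortZ} yields $g(X,X)=0$ only when $X$ or $JX$ happens to lie in $C^1\g$, so that route would need genuine extra work. If what you had in mind is the unconditional statement that every compatible metric on a nonabelian nilpotent Lie algebra is indefinite, that is the Benson--Gordon theorem \cite{BG} quoted in the Preface, not this corollary.
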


\begin{corollary}
$\omega (C^1\mathfrak{g} \oplus J (C^1\mathfrak {g}), \mathfrak{a}_1(J)) =0$.
\end{corollary}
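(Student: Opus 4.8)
The plan is to decompose the claim along the direct sum $C^1\g \oplus J(C^1\g)$ and reduce each piece to Lemma \ref{C1ortZ}, which already supplies $\omega(C^1\g, \cZ) = 0$. Throughout I would use the two facts recorded above: that $\mathfrak{a}_1(J)$ is $J$-invariant and that $\mathfrak{a}_1(J) \subseteq \cZ$.

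First I would dispose of the first summand. Since $\mathfrak{a}_1(J) \subseteq \cZ$, every pairing $\omega(X, Z)$ with $X \in C^1\g$ and $Z \in \mathfrak{a}_1(J)$ is a pairing of $C^1\g$ against a central vector, hence vanishes by Lemma \ref{C1ortZ}. This yields $\omega(C^1\g, \mathfrak{a}_1(J)) = 0$ at once.

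For the second summand I would exploit the compatibility condition to transfer $J$ from the first argument into the second. Taking $X \in C^1\g$ and $Z \in \mathfrak{a}_1(J)$, and applying $\omega(JU, JV) = \omega(U, V)$ with $U = JX$, $V = Z$ together with $J^2 = -I$, one obtains
\[
\omega(JX, Z) = \omega\big(J(JX), JZ\big) = -\,\omega(X, JZ).
\]
Because $\mathfrak{a}_1(J)$ is $J$-invariant, $JZ$ again lies in $\mathfrak{a}_1(J) \subseteq \cZ$, so $\omega(X, JZ) = 0$ by Lemma \ref{C1ortZ}. Hence $\omega(J(C^1\g), \mathfrak{a}_1(J)) = 0$, and combining the two halves proves the corollary.

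There is no genuine obstacle here: the whole argument is a bilinear reduction to the preceding lemma. The only point requiring care is the order in which $J$-invariance and compatibility are invoked, since one must first check that $JZ$ remains inside the center before Lemma \ref{C1ortZ} can be applied to the transferred pairing $\omega(X, JZ)$. This is exactly what the $J$-invariance of $\mathfrak{a}_1(J)$, together with its containment in $\cZ$, guarantees.
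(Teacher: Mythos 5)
Your proof is correct and takes essentially the same route the paper intends: the corollary is stated there as an immediate consequence of Lemma \ref{C1ortZ}, obtained precisely by your decomposition, namely $\omega(C^1\g,\mathfrak{a}_1(J))=0$ since $\mathfrak{a}_1(J)\subseteq\cZ$, and $\omega(JX,Z)=-\omega(X,JZ)=0$ for $X\in C^1\g$, $Z\in\mathfrak{a}_1(J)$ by compatibility together with the $J$-invariance of $\mathfrak{a}_1(J)$.
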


\begin {corollary} \label {1.6}
For any (pseudo) K\"{a}hler structure $(\mathfrak{g}, \omega, g, J) $, the ideal $\mathfrak{a}_1(J) \subset \mathcal{Z} $ is orthogonal to a subspace $C^1\mathfrak{g}\oplus J(C^1\mathfrak {g})$:
$$
g(C^1\mathfrak{g}\oplus J(C^1\mathfrak{g}),\mathfrak{a}_1(J)) =0.
$$
\end {corollary}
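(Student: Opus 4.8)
The plan is to deduce the statement directly from the preceding corollary, using nothing beyond the definition of the associated metric and the $J$-invariance of $\mathfrak{a}_1(J)$. First I would take arbitrary vectors $X\in C^1\mathfrak{g}\oplus J(C^1\mathfrak{g})$ and $Y\in\mathfrak{a}_1(J)$ and rewrite the quantity to be controlled by means of the defining formula $g(X,Y)=\omega(X,JY)$. The whole problem is thereby transferred from $g$ to $\omega$: it suffices to show that $\omega(X,JY)=0$.

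The decisive ingredient is that $\mathfrak{a}_1(J)$ is a $J$-invariant ideal, a fact recorded in the excerpt immediately after the definition of the series $\{\mathfrak{a}_l(J)\}$. Hence $Y\in\mathfrak{a}_1(J)$ forces $JY\in\mathfrak{a}_1(J)$ as well, so the expression $\omega(X,JY)$ now has its first argument in $C^1\mathfrak{g}\oplus J(C^1\mathfrak{g})$ and its second argument in $\mathfrak{a}_1(J)$.

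At this stage the previous corollary, which states $\omega(C^1\mathfrak{g}\oplus J(C^1\mathfrak{g}),\mathfrak{a}_1(J))=0$, applies verbatim and yields $\omega(X,JY)=0$, that is, $g(X,Y)=0$. As $X$ and $Y$ range over the two respective subspaces, this is exactly the asserted orthogonality $g(C^1\mathfrak{g}\oplus J(C^1\mathfrak{g}),\mathfrak{a}_1(J))=0$.

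I do not anticipate any genuine obstacle: the content is entirely carried by the two facts above, namely the identity $g(\cdot,\cdot)=\omega(\cdot,J\cdot)$ and the stability of $\mathfrak{a}_1(J)$ under $J$. The single point one must articulate clearly is that $J$-invariance is invoked precisely in order to keep $JY$ inside $\mathfrak{a}_1(J)$, so that the earlier corollary can be used without any modification.
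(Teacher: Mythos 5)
Your proof is correct and follows exactly the route the paper intends (the paper leaves this corollary unproved, as it is the immediate consequence of the preceding corollary $\omega(C^1\mathfrak{g}\oplus J(C^1\mathfrak{g}),\mathfrak{a}_1(J))=0$ combined with $g(X,Y)=\omega(X,JY)$ and the $J$-invariance of $\mathfrak{a}_1(J)$). You also correctly identify the one point that must be made explicit, namely that $J$-invariance is what keeps $JY$ inside $\mathfrak{a}_1(J)$ so the earlier corollary applies verbatim.
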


From the formula
\[
2g (\nabla_X Y, Z) {=} g ([X, Y], Z) {+} g ([Z, X], Y) {+} g (X, [Z, Y])
\]
for a covariant derivative $\nabla$ of left-invariant vector fields, the following observations are at once implied:
\begin{itemize}
\item if the vectors $X $ and $Y $ lie in the center $\mathcal{Z}$ of the Lie algebra $\g$, then $ \nabla_X Y=0$ for any left-invariant (pseudo) Riemannian metric $g$ on the Lie algebra; \
\item if the vector $X$ lie in the center $\mathcal{Z}$ of the Lie algebra $\g$, then $\nabla_X Y = \nabla_Y X$.
\end{itemize}

\begin{lemma} \label {1.7}
If the vector $X$ lies in an ideal $\mathfrak{a}_1(J)\subset \mathcal{Z}$, then $\nabla_X Y=\nabla_Y X=0$,\ $\forall Y\in \mathfrak{g}$.
\end{lemma}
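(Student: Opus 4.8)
The plan is to feed the Koszul formula the single fact that $X$ is central and then kill the one surviving term by Corollary \ref{1.6}. First I would record that $X\in\mathfrak{a}_1(J)\subset\mathcal{Z}$ forces $[X,Y]=0$ and $[Z,X]=0$ for all $Y,Z\in\mathfrak{g}$. Substituting this into
$$
2g(\nabla_X Y,Z)=g([X,Y],Z)+g([Z,X],Y)+g(X,[Z,Y])
$$
collapses the first two terms, leaving $2g(\nabla_X Y,Z)=g(X,[Z,Y])$ for every $Z\in\mathfrak{g}$.

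The key observation is that the surviving bracket lands in exactly the right subspace: $[Z,Y]\in C^1\mathfrak{g}$ while $X\in\mathfrak{a}_1(J)$, so the orthogonality $g(C^1\mathfrak{g}\oplus J(C^1\mathfrak{g}),\mathfrak{a}_1(J))=0$ of Corollary \ref{1.6} (together with the symmetry of $g$) gives $g(X,[Z,Y])=0$. Hence $g(\nabla_X Y,Z)=0$ for all $Z$, and since $g$ is nondegenerate this yields $\nabla_X Y=0$.

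To obtain $\nabla_Y X=0$ I would avoid repeating the computation and instead invoke the second of the two bullet observations recorded just before the lemma: because $X$ lies in the center $\mathcal{Z}$, one has $\nabla_X Y=\nabla_Y X$, whence $\nabla_Y X=\nabla_X Y=0$ as well.

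As for the main obstacle: there is essentially no hard computation here, and the entire content lies in matching the leftover Koszul term to the hypothesis of Corollary \ref{1.6}. The one point to be careful about is that this matching genuinely uses $X\in\mathfrak{a}_1(J)$ and not merely $X\in\mathcal{Z}$: for a general central vector the term $g(X,[Z,Y])$ need not vanish, so the vanishing of $\nabla_X Y$ is special to the smaller ideal $\mathfrak{a}_1(J)$ on which Corollary \ref{1.6} is available.
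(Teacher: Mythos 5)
Your proposal is correct and follows the paper's own argument exactly: reduce the Koszul formula via centrality to $2g(\nabla_X Y,Z)=g(X,[Z,Y])$, kill that term with Corollary \ref{1.6} since $[Z,Y]\in C^1\mathfrak{g}$ and $X\in\mathfrak{a}_1(J)$, and then get $\nabla_Y X=0$ from the observation $\nabla_X Y=\nabla_Y X$ for central $X$. Your closing remark that the argument genuinely needs $X\in\mathfrak{a}_1(J)$ rather than just $X\in\mathcal{Z}$ is a correct and worthwhile clarification of why the lemma is stated for the smaller ideal.
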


\begin{proof}
Let $X\in \mathfrak{a}_1(J)\subset \mathcal{Z}$ and $Z,Y\in \mathfrak{g}$. Then corollary \ref {1.6} implies that $2g(\nabla_X Y,Z) = g(X,[Z,Y])=0$.
\end{proof}

Let $R(X,Y)Z = \nabla_X  \nabla_Y Z - \nabla_Y  \nabla_X Z -\nabla_{[X,Y]} Z$
be the curvature tensor.

\begin{corollary} \label{1.8}
If $X\in \mathfrak{a}_1(J) \subset \mathcal{Z}$, then $R(X,Y)Z = R(Z,Y) X=0$,  for all  $Y, Z\in \mathfrak {g} $.
\end{corollary}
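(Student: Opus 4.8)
The plan is to read off both identities directly from Lemma~\ref{1.7}, which asserts that $\nabla_X Y = \nabla_Y X = 0$ for every $Y \in \mathfrak{g}$ whenever $X \in \mathfrak{a}_1(J)$. In operator terms this says two things at once: the derivation $\nabla_X$ annihilates the whole Lie algebra (so $\nabla_X W = 0$ for all $W \in \mathfrak{g}$), and $X$ is a parallel vector (so $\nabla_W X = 0$ for all $W \in \mathfrak{g}$). Together with the fact that $X$ is central, hence $[X,Y]=0$, these are all the ingredients needed.

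For the first identity I would expand the curvature tensor
\[
R(X,Y)Z = \nabla_X \nabla_Y Z - \nabla_Y \nabla_X Z - \nabla_{[X,Y]} Z
\]
and dispatch the three terms one by one. The term $\nabla_X \nabla_Y Z$ vanishes because $\nabla_X$ kills everything, applied here to $W = \nabla_Y Z$. The term $\nabla_Y \nabla_X Z$ vanishes because $\nabla_X Z = 0$ already, so its further derivative is $\nabla_Y 0 = 0$. Finally $\nabla_{[X,Y]} Z = 0$ since $X \in \mathcal{Z}$ forces $[X,Y] = 0$. Hence $R(X,Y)Z = 0$.

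For the second identity I would expand
\[
R(Z,Y)X = \nabla_Z \nabla_Y X - \nabla_Y \nabla_Z X - \nabla_{[Z,Y]} X,
\]
where now $X$ sits in the last slot throughout. By the parallelism half of Lemma~\ref{1.7} we have $\nabla_Y X = 0$, $\nabla_Z X = 0$, and $\nabla_{[Z,Y]} X = 0$ (the last because $[Z,Y] \in \mathfrak{g}$), so all three terms are zero and $R(Z,Y)X = 0$.

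There is essentially no obstacle here: the corollary is a formal consequence of Lemma~\ref{1.7} and the definition of $R$. The only point requiring a moment's attention is that Lemma~\ref{1.7} supplies vanishing for $X$ in either argument of $\nabla$, and that the bracket term is handled by centrality rather than by the lemma; keeping track of which of the two vanishing statements is used for each term is all that the bookkeeping amounts to.
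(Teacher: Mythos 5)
Your proof is correct and follows exactly the route the paper intends: the paper states this as an immediate corollary of Lemma~\ref{1.7}, and your expansion of $R(X,Y)Z$ and $R(Z,Y)X$ term by term, using $\nabla_X \equiv 0$, $\nabla_{\cdot}X \equiv 0$, and $[X,Y]=0$ from centrality, is precisely the omitted verification. Nothing is missing.
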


\begin{corollary} \label{ParRiem}
Let $J$ be a compatible almost complex structure on the symplectic nilpotent Lie algebra $(\g, \omega)$, and let $g=\omega \cdot J$ be the associated (pseudo) Riemannian metric. Let us choose a basis $\{e_1, e_2,\dots e_{2p}, e_{2p+1},\dots, e_{2m-1}, e_{2m}\}$ of the Lie algebra $\g$, such that $\{e_{2p},e_{2p+1}\dots e_{2m-1},e_{2m}\}$ is a basis of the ideal $\mathfrak{a}_1(J) \subset \mathcal{Z}$.
Let $J = (\psi_{ij})$ be the matrix of $J$ in this basis. Then for all $X,Y\in \g$, the covariant derivative $\nabla_X Y$ does not depend on the free parameters $\psi_{ij}$, where $i=2p,\dots 2m$, and for all $j$. In particular, the curvature tensor does not depend on these parameters $\{\psi_{ij}\}$, $i=2p,\dots 2m,\, j=1,\dots, 2m$.
\end{corollary}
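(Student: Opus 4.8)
The plan is to read the Koszul formula as a formula determining the covector $Z\mapsto g(\nabla_X Y,Z)$, show that this covector is free of the parameters $\psi_{ij}$ with $i=2p,\dots,2m$, and then pass from the covector to the vector $\nabla_X Y$ itself. The last passage is where the real work lies, because $g=\omega\cdot J$ is built from $J$ and so itself depends on exactly those parameters; the whole point will be to use Lemma~\ref{C1ortZ} twice to neutralize this dependence.

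First I would fix $X,Y\in\g$ and, using the symmetry of $g$, rewrite the right-hand side of $2g(\nabla_X Y,Z)=g([X,Y],Z)+g([Z,X],Y)+g(X,[Z,Y])$ as
$$F(X,Y,Z)=\omega([X,Y],JZ)+\omega([Z,X],JY)+\omega([Z,Y],JX),$$
so that in every term the bracket (which lies in $C^1\g$) occupies the first slot of $\omega$. Since $Je_j=\sum_i\psi_{ij}e_i$, a parameter $\psi_{ij}$ with $i\in\{2p,\dots,2m\}$ enters $F$ only through a factor $\omega(\,\cdot\,,e_i)$ with the bracket in the opposite slot; as $e_i\in\mathfrak{a}_1(J)\subset\mathcal{Z}$ and the bracket lies in $C^1\g$, Lemma~\ref{C1ortZ} forces this factor to vanish. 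Hence $F(X,Y,Z)$ does not depend on any $\psi_{ij}$ with $i=2p,\dots,2m$.

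Next I would show that $\nabla_X Y$ is $\omega$-orthogonal to $\mathfrak{a}_1(J)$. Taking $Z=e_{i_0}$ with $e_{i_0}\in\mathfrak{a}_1(J)\subset\mathcal{Z}$ annihilates the last two terms, since $[e_{i_0},\cdot]=0$, leaving $2g(\nabla_X Y,e_{i_0})=\omega([X,Y],Je_{i_0})$; as $\mathfrak{a}_1(J)$ is $J$-invariant, $Je_{i_0}\in\mathfrak{a}_1(J)\subset\mathcal{Z}$, so this vanishes by Lemma~\ref{C1ortZ}. Thus $g(\nabla_X Y,\mathfrak{a}_1(J))=0$, and since $g(\,\cdot\,,e_{i_0})=\omega(\,\cdot\,,Je_{i_0})$ while $J$ maps $\mathfrak{a}_1(J)$ onto itself, I conclude $\omega(\nabla_X Y,\mathfrak{a}_1(J))=0$.

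The main obstacle is the final step: deducing that $\nabla_X Y$ is parameter-free from the fact that the covector $g(\nabla_X Y,\,\cdot\,)$ is, given that $g$ itself varies with the $\psi_{ij}$. I would compare the metrics $g,g'$ arising from two parameter choices $\psi,\psi'$ that differ only in the rows $i=2p,\dots,2m$. Then $g'(A,B)-g(A,B)=\omega(A,(J'-J)B)$ with $(J'-J)B\in\mathfrak{a}_1(J)$, so by the orthogonality just proved $g'(\nabla_X Y,Z)=g(\nabla_X Y,Z)=\tfrac12 F(X,Y,Z)$; since $F$ is parameter-free this equals $\tfrac12 F'(X,Y,Z)=g'(\nabla'_X Y,Z)$ for all $Z$, and nondegeneracy of $g'$ gives $\nabla_X Y=\nabla'_X Y$. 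Thus $\nabla_X Y$ is independent of the stated parameters, and because $R(X,Y)Z=\nabla_X\nabla_Y Z-\nabla_Y\nabla_X Z-\nabla_{[X,Y]}Z$ is assembled from covariant derivatives, the curvature tensor inherits the same independence.
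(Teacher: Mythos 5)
Your proof is correct, and it shares the paper's starting point: both arguments feed the Koszul formula through Lemma~\ref{C1ortZ}, using the splitting of $J$ into its rows-$(1,\dots,2p-1)$ part $J_1$ and its $\mathfrak{a}_1(J)$-valued part $J_a$, to show that every occurrence of the parameters $\psi_{ij}$, $i=2p,\dots,2m$, is killed because the brackets lie in $C^1\g$ and $\omega(C^1\g,\mathcal{Z})=0$. Where you genuinely diverge is at the step you correctly identify as the real work — extracting the vector $\nabla_XY$ from a covector when $g$ itself moves with the parameters. The paper avoids this difficulty altogether: it substitutes $Z\mapsto -JZ$ in the identity $2\omega(\nabla_XY,JZ)=\omega([X,Y],JZ)+\omega([Z,X],JY)+\omega([Z,Y],JX)$, obtaining a formula for $2\omega(\nabla_XY,Z)$ in which only $J_1$ appears; since the extracting form is now the fixed nondegenerate $\omega$ rather than the parameter-dependent $g$, independence of $\nabla_XY$ is immediate. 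You instead keep the $g$-pairing and compensate with two extra steps: the orthogonality $\omega(\nabla_XY,\mathfrak{a}_1(J))=0$ (proved via $J$-invariance of $\mathfrak{a}_1(J)$), and the two-metric comparison $g'-g=\omega(\,\cdot\,,(J'-J)\,\cdot\,)$, where $J'-J$ takes values in $\mathfrak{a}_1(J)$, followed by nondegeneracy of $g'$. Both steps are sound. What your route buys: it makes fully explicit the logical point that the paper's write-up handles silently (its closing sentence "$J_1$ does not depend on the parameters, hence neither does $\nabla_XY$" is only legitimate because the pairing had been switched to $\omega$), and it yields the by-product $g(\nabla_XY,\mathfrak{a}_1(J))=0$, which is close in spirit to Lemma~\ref{1.7}. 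What the paper's route buys: brevity, a single connection and a single metric, and no appeal to a second nondegenerate form. Note also that both proofs rest on the same tacit hypothesis, acknowledged in the paper's Remark 1 and in your choice of $\psi,\psi'$ differing only in rows $2p,\dots,2m$: namely that these free parameters can be varied while the entries in rows $1,\dots,2p-1$ remain fixed, which is what the block-triangular form (\ref{J6Bl}) of the constraints guarantees.
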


\begin{proof}
For all $X\in \g$, let $JX = J_1X +J_aX$, where $J_1X \in \mathbb{R}\{e_1, e_2,\dots, e_{2p-1}\}$ and $J_aX\in \mathbb{R}\{e_{2p}, e_{2p+1},\dots, e_{2m-1}, e_{2m}\} = \mathfrak{a}_1(J)\subset \mathcal{Z}$.
From the covariant derivative formula it follows that for all $Z\in \g$:
$$
2g(\nabla_X Y, Z)=2\omega(\nabla_X Y, JZ)= \omega([X,Y],JZ) + \omega([Z,X],JY)  +\omega([Z,Y],JX),
$$
$$
2\omega(\nabla_X Y, Z)= \omega([X,Y],Z) - \omega([JZ,X],JY)  -\omega([JZ,Y],JX)=
$$
$$
= \omega([X,Y],Z) + \omega([J_1Z+J_aZ,X],J_1Y+J_aY)  +\omega([J_1Z+J_aZ,Y],J_1X+J_aX)=
$$
$$
=\omega([X,Y],Z) + \omega([J_1Z,X],J_1Y)  +\omega([J_1,Y],J_1X).
$$
As the component $J_1$ does not depend on the parameters $\{\psi_{ij}\}$, $i=2p,\dots 2m,\, j=1,\dots, 2m$, it is also true that the covariant derivative $\nabla_X Y$ does not depend on them.
\end{proof}

\begin{corollary} \label{ParRiem6}
For the six-dimensional case, if the nilpotent complex structure $J$ compatible with $\omega$ is of the form in (\ref{J6Bl}), then the curvature tensor $R(X,Y)Z=\nabla_X \nabla_Y Z -\nabla_Y \nabla_X Z -\nabla_{[X,Y]}Z$ of the associated metric does not depend on the free parameters $\psi_{5j}$, $\psi_{6j}$, $j=1,\dots, 6$.
\end{corollary}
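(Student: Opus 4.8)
The plan is to read this off as the six-dimensional instance of Corollary \ref{ParRiem}, so that the entire argument reduces to pinning down the ideal $\mathfrak{a}_1(J)$ for a complex structure of the block form (\ref{J6Bl}). First I would inspect the last two columns of the matrix $J_0$: they have nonzero entries only in rows $5$ and $6$, so $Je_5=\psi_{55}e_5+\psi_{65}e_6$ and $Je_6=\psi_{56}e_5+\psi_{66}e_6$. Hence the subspace spanned by $\{e_5,e_6\}$ is $J$-invariant. Since the basis underlying (\ref{J6Bl}) is chosen precisely so that $\mathfrak{g}_1=\{e_5,e_6\}$, and $\mathfrak{g}_1$ is the center $\mathcal{Z}$, we get $J(\mathcal{Z})=\mathcal{Z}$.

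From here the identification is immediate. Every $X\in\{e_5,e_6\}$ satisfies $[X,\mathfrak{g}]=0$ because $X\in\mathcal{Z}$, and $JX\in\{e_5,e_6\}\subseteq\mathcal{Z}$ forces $[JX,\mathfrak{g}]=0$ as well; by the definition of the $J$-compatible ascending series this means $\{e_5,e_6\}\subseteq\mathfrak{a}_1(J)$ (indeed $\mathfrak{a}_1(J)=\mathcal{Z}=\{e_5,e_6\}$ whenever the center is two-dimensional). I would then apply Corollary \ref{ParRiem} with a basis whose final vectors $\{e_5,e_6\}$ span $\mathfrak{a}_1(J)$: that corollary asserts that $\nabla_XY$, and therefore the curvature tensor $R$, is independent of the entries $\psi_{ij}$ in the rows indexed by a basis of $\mathfrak{a}_1(J)$. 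Here those rows are $i=5,6$, which yields exactly the claimed independence from $\psi_{5j},\psi_{6j}$ for $j=1,\dots,6$.

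The only genuine difficulty is bookkeeping: Corollary \ref{ParRiem} is phrased with a generic basis $\{e_{2p},\dots,e_{2m}\}$ of $\mathfrak{a}_1(J)$, and one must check that the six-dimensional block form (\ref{J6Bl}) really places $\mathfrak{a}_1(J)$ in the last coordinates so that the excluded rows are precisely $5$ and $6$. If one prefers to avoid matching index conventions, the statement can be re-derived directly: writing $JW=J_1W+J_aW$ for $W\in\mathfrak{g}$, with $J_aW\in\{e_5,e_6\}$ the central part built from rows $5,6$, the brackets $[J_aZ,X]$ and $[J_aZ,Y]$ vanish by centrality, while the terms $\omega([J_1Z,X],J_aY)$ and $\omega([J_1Z,Y],J_aX)$ vanish by the orthogonality $\omega(C^1\mathfrak{g}\oplus J(C^1\mathfrak{g}),\mathfrak{a}_1(J))=0$ recorded before Corollary \ref{1.6}. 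Thus $J_a$ drops out of the covariant derivative formula entirely, $\nabla_XY$ is governed only by $J_1$ (rows $1$ through $4$), and the curvature cannot depend on $\psi_{5j},\psi_{6j}$.
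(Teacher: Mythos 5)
Your proof is correct and takes essentially the same route as the paper: Corollary \ref{ParRiem6} is stated there as the six-dimensional instance of Corollary \ref{ParRiem}, and your direct re-derivation (splitting $J=J_1+J_a$, killing the $[J_aZ,\cdot\,]$ brackets by centrality and the cross terms by $\omega(C^1\mathfrak{g}\oplus J(C^1\mathfrak{g}),\mathfrak{a}_1(J))=0$) is precisely the paper's proof of that corollary. The only step the paper leaves implicit --- that the block form (\ref{J6Bl}) forces $\mathbb{R}\{e_5,e_6\}$ to be $J$-invariant, central, and hence contained in (in fact equal to) $\mathfrak{a}_1(J)$, so the excluded rows are exactly $i=5,6$ --- is the bookkeeping you supply correctly.
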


\textbf{Remark 1.} The parameters $\psi_{ij}$ of the complex structure $J$ are restricted by three conditions: a compatibility condition, an integrability condition and the fact that $J^2 =-1$. Therefore, from what is specified above, some of the parameters can be express through others. In corollaries \ref{ParRiem} and \ref{ParRiem6} it this is a question of free parameters, i.e., parameters which remain independent. Curvature does not depend on them.
 \vspace{1mm}

For a six-dimensional nilpotent Lie algebra $\g$ which possesses a pseudo-K\"{a}hler structure, the dimensions of its increasing central sequence $\mathfrak{g}_k$ can be: (2,4,6), (2,6), (3,6), (4,6) and 6.
We will  call the sequence of these dimensions the \emph{Lie algebra type}. In the list of Lie algebras in theorem \ref{PsKahl_6}, the Lie algebras with type (2,4,6) are at the beginning, and are the first seven Lie algebras.

Let us consider the nilpotent Lie algebras for which the sequence of ideals $\mathfrak{g}_1\subset \mathfrak{g}_2\subset\mathfrak{g}_3 =\mathfrak{g}$ has the dimensions (2,4,6). It is easy to see that such a Lie algebra of type (2,4,6) is decomposed into the direct sum of two-dimensional subspaces:
$$
\mathfrak {g} = A \oplus B \oplus \mathcal {Z},
$$
with properties:
\begin{itemize}
\item $\mathcal{Z} = \mathfrak{g}_1$, the Lie algebra center,
\item $B \oplus \mathcal{Z} = \mathfrak{g}_2$,
\item $[A, A] \subset B \oplus \mathcal{Z}$, \ $ [A, B] \subset \mathcal{Z}$.
\end{itemize}

Let us consider further that in $\mathfrak{g}$ the basis $e_1,\dots, e_6$  is chosen so that   $\{e_1,e_2\}$, $\{e_3,e_4\}$ and $\{e_5,e_6\}$ are the bases of the subspaces $A$, $B$ and $\mathcal{Z}$ respectively.

For any nilpotent complex structure $J $ on algebra of type $ (2,4,6), $ the sequence of ideals $\mathfrak {a}_k(J) $ coincides with $\mathfrak {g}_k $, \ $k=1,2,3$ and the matrix $J$ has a block shape (\ref {J6Bl}).
For this complex structure $J$ we also have $C^1\mathfrak{g}\oplus J(C^1\mathfrak{g})=B\oplus \mathcal{Z}=\mathfrak{g}_2$.

The subspace $W\subset \mathfrak {g}$ is called $\omega$-\emph{isotropic} if and only if $\omega(W, W) =0$.
We will call subspaces $U, V \subset \mathfrak{g}$ $\omega$-\emph{dual} if, for any vector $X\in U$ there is a vector $Y\in V$ such that $\omega (X, Y) \ne 0$ and, on the contrary, $\forall Y\in V$, $\exists X\in U$, such that $\omega (X, Y) \ne 0$.

\begin{theorem} \label{246}
Let the six-dimensional symplectic Lie algebra $(\mathfrak{g}, \omega)$ have type (2,4,6) and
\begin{equation} \label{abz}
\mathfrak {g} = A \oplus B \oplus \mathcal {Z},
\end{equation}
where $B \oplus \mathcal{Z} = \mathfrak{g}_2$ is an abelian subalgebra.
We will assume that the subspaces $A$ and $\mathcal{Z}$ are $\omega$-isotropic and $\omega$-dual, and that on the subspace $B$ the form $\omega$ is nondegenerate.
Then for any nilpotent complex structure $J$ compatible with $\omega$, and the Levi-Civita connection $\nabla$ of the associated pseudo-Riemannian metric $g=\omega\cdot J$, the following properties are fulfilled:
\begin{itemize}
\item $\nabla_X Y\in B \oplus \mathcal{Z},\quad \forall\,  X,Y\in A$,
\item $\nabla_X Y, \nabla_Y X\in \mathcal{Z}$,\quad for all \ $X\in A,\, Y\in B$,

\item $\nabla_X Y = \nabla_Y X =0$,\quad for all \ $X\in A,\, Y\in \mathcal{Z}$,

\item  $\nabla_X Y=0,\quad \forall \, X,Y\in B\oplus \mathcal{Z}$.
\end{itemize}
\end{theorem}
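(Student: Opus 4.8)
The plan is to use the Koszul-type formula for the Levi-Civita connection already exploited in the preceding lemmas, namely
$$
2g(\nabla_X Y, Z) = g([X,Y],Z) + g([Z,X],Y) + g(X,[Z,Y]),
$$
and to verify each of the four bulleted claims by feeding in the structural hypotheses on $A$, $B$ and $\mathcal{Z}$. The crucial reductions available are: $\mathcal{Z}$ is the center (so brackets with $\mathcal{Z}$ vanish), $B\oplus\mathcal{Z}=\g_2$ is abelian (so brackets among $B\oplus\mathcal{Z}$ vanish), and the containments $[A,A]\subset B\oplus\mathcal{Z}$ and $[A,B]\subset\mathcal{Z}$. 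I would also record, from Lemma \ref{C1ortZ} and its corollaries, that for this type-$(2,4,6)$ situation $C^1\g\oplus J(C^1\g)=B\oplus\mathcal{Z}$ is $g$-orthogonal to $\mathcal{Z}=\mathfrak{a}_1(J)$, and that by Lemma \ref{1.7} any $X\in\mathcal{Z}$ satisfies $\nabla_X Y=\nabla_Y X=0$.

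The last bullet is then immediate: if $X,Y\in B\oplus\mathcal{Z}=\g_2$, pick $Z$ arbitrary; since $\g_2$ is abelian $[X,Y]=0$, and $[Z,X],[Z,Y]\in C^1\g\subset B\oplus\mathcal{Z}$, so the three terms of the Koszul formula are $g$-pairings that I would show vanish — here the $\omega$-isotropy of $\mathcal{Z}$ and the orthogonality $g(B\oplus\mathcal{Z},\mathcal{Z})=0$ from Corollary \ref{1.6} do the work, after replacing $Y$ by its $J_1$- and $J_a$-components as in Corollary \ref{ParRiem}. For the third bullet, $X\in A$, $Y\in\mathcal{Z}$: since $Y$ is central, Lemma \ref{1.7} already gives $\nabla_X Y=\nabla_Y X=0$ with no further computation. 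For the second bullet, $X\in A$, $Y\in B$: I would compute $2g(\nabla_X Y,Z)$ and show it vanishes whenever $Z\in A\oplus B$, forcing $\nabla_X Y\in\mathcal{Z}$ (using that $\mathcal{Z}$ is the $g$-orthogonal complement dictated by the $\omega$-duality of $A$ and $\mathcal{Z}$ and the nondegeneracy of $\omega$ on $B$); the key bracket inputs are $[X,Y]\in\mathcal{Z}$ and $[A,A]\subset B\oplus\mathcal{Z}$. For the first bullet, $X,Y\in A$: I would test $2g(\nabla_X Y,Z)$ against $Z\in A$ and show it is zero, so that $\nabla_X Y\perp_g A$, which — given that $A$ is $g$-dual to $\mathcal{Z}$ and the metric splitting — places $\nabla_X Y$ in $B\oplus\mathcal{Z}$.

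The main obstacle I anticipate is the bookkeeping of which subspace is $g$-orthogonal to which, since $g(X,Y)=\omega(X,JY)$ mixes the $\omega$-isotropy/duality hypotheses (stated for $\omega$) with the $J$-action, and $J$ is only block-triangular, not block-diagonal. The clean way through is to pass consistently from $g$ to $\omega$ via $g(\cdot,\cdot)=\omega(\cdot,J\cdot)$, decompose every $J$-image as $J_1+J_a$ with $J_a\in\mathcal{Z}$ as in Corollary \ref{ParRiem}, and repeatedly invoke $\omega(C^1\g,\mathcal{Z})=0$ together with the $\omega$-isotropy of $A$ and $\mathcal{Z}$; once these orthogonality relations are tabulated, each bullet reduces to checking that the surviving Koszul terms land in the asserted subspace.
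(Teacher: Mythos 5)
Your third and fourth bullets are essentially workable (the third is immediate from Lemma \ref{1.7}, since $\mathfrak{a}_1(J)=\mathcal{Z}$ for a nilpotent $J$ on a type-$(2,4,6)$ algebra; in the fourth you should invoke the containment $[\mathfrak{g},\mathfrak{g}_2]\subseteq\mathfrak{g}_1=\mathcal{Z}$ rather than merely $[Z,X]\in C^1\mathfrak{g}\subset B\oplus\mathcal{Z}$, because $g$ restricted to $B$ is nondegenerate, so a $g$-pairing of two vectors of $B\oplus\mathcal{Z}$ need not vanish). The genuine gap is in your first two bullets: you have the $g$-orthogonality structure backwards. Since $g(\cdot,\cdot)=\omega(\cdot,J\cdot)$, $J\mathcal{Z}=\mathcal{Z}$ and $J\mathfrak{g}_2=\mathfrak{g}_2$, the $\omega$-duality of $A$ and $\mathcal{Z}$ forces $g$ to pair $A$ nondegenerately \emph{with} $\mathcal{Z}$, while Corollary \ref{1.6} gives $g(B\oplus\mathcal{Z},\mathcal{Z})=0$. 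Hence $\mathcal{Z}^{\perp_g}=B\oplus\mathcal{Z}$ and $(B\oplus\mathcal{Z})^{\perp_g}=\mathcal{Z}$, whereas no nonzero vector of $\mathcal{Z}$ is $g$-orthogonal to $A$ (for $z\in\mathcal{Z}$, $g(z,a)=\omega(z,Ja)$ reduces by $\omega(\mathcal{Z},\mathfrak{g}_2)=0$ to an $\omega$-pairing of $z$ against the $A$-component of $Ja$, and these components sweep out $A$). So orthogonality to $A$ does not kill the $A$-component of $\nabla_XY$ — it probes its $\mathcal{Z}$-component — $A^{\perp_g}\neq B\oplus\mathcal{Z}$, and $(A\oplus B)^{\perp_g}$ meets $\mathcal{Z}$ only in $0$; your second-bullet scheme (show $\nabla_X Y\perp_g A\oplus B$, conclude $\nabla_X Y\in\mathcal{Z}$) could never succeed, since any vector satisfying both of those conclusions is zero.

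Moreover, the vanishing you propose to establish in the second bullet is simply false. In $\mathfrak{g}_{21}$ with the paper's canonical structure ($J(e_2)=-a\,e_1$, $J(e_4)=a\,e_3$, $J(e_6)=a\,e_5$, so that $g(e_2,e_6)=a$), take $X=Z=e_2\in A$ and $Y=e_3\in B$: the Koszul formula gives $2g(\nabla_{e_2}e_3,e_2)=g([e_2,e_3],e_2)+g([e_2,e_2],e_3)+g(e_2,[e_2,e_3])=2g(e_6,e_2)=2a\neq 0$. Indeed $\nabla_{e_2}e_3=e_6\in\mathcal{Z}$, exactly as the theorem asserts, and it pairs nontrivially with $A$ precisely because of the $A$--$\mathcal{Z}$ duality. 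The correct choice of test vectors is the opposite of yours, and it is what the paper does: for the first bullet take $Z\in\mathcal{Z}$, so that $JZ$ sweeps out $\mathcal{Z}$ (this detects the $A$-component of $\nabla_XY$ by duality); then $[Z,\cdot\,]=0$ kills two Koszul terms and the remaining one, $\omega([X,Y],JZ)$, vanishes by Lemma \ref{C1ortZ}. For the second bullet take in addition $Z\in\mathfrak{g}_2$ with $JZ\in B$ (detecting the $B$-component via nondegeneracy of $\omega$ on $B$); commutativity of $\mathfrak{g}_2$ kills one term, and the surviving terms $\omega([X,Y],JZ)+\omega([Z,X],JY)$ vanish because $[X,Y],[Z,X]\in\mathcal{Z}$ while $JZ,JY\in\mathfrak{g}_2=C^1\mathfrak{g}\oplus J(C^1\mathfrak{g})$, which is $\omega$-orthogonal to $\mathcal{Z}$.
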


\begin{proof}
Let $X, Y\in A$. If $\nabla_X Y$ has a nonzero component from $A$ then there is a vector $JZ\in \mathcal {Z} $, such that $ \omega (\nabla_X Y, JZ) \ne 0$. On the other hand, $2\omega (\nabla_X Y, JZ) = 2g(\nabla_X Y, Z) = g([X, Y], Z) + g([Z,X], Y) + g([Z, Y], X) = \omega ([X, Y], JZ) =0$, as, from lemma \ref{C1ortZ}, $ \omega (C^1\mathfrak {g}, \mathfrak {a} _1 (J)) =0$.

Let now $X\in A$ and $Y\in B$.
It can be shown in an exactly similar way that $\nabla_X Y$ has a zero component from $A$.
We will assume that $\nabla_X Y $ has a non-zero component from $B$.
Then there is a vector $Z\in B\oplus \mathcal{Z}$ such that $JZ\in B$ and such that  $\omega (\nabla_X Y, JZ) \ne 0$.
At the same time, $2\omega(\nabla_X Y,JZ) = 2g(\nabla_X Y,Z) = g([X,Y],Z) +g([Z,X],Y)  = \omega([X,Y],JZ)+ \omega([Z,X],JY) =0$.
The last equality follows from the commutativity of $B\oplus \mathcal{Z}$, and from this that    $Y,JY,Z,JZ\in B \subset  C^1\mathfrak{g} \oplus J(C^1\mathfrak{g})$, so then $[X,Y],[Z,X] \in \mathcal{Z}$ and  $\omega(C^1\mathfrak{g} \oplus  J(C^1\mathfrak{g}),\mathcal{Z})=0$. $\nabla_Y X =\nabla_X Y - [X, Y]\in \mathcal{Z}$.

Let us consider the third statement. Let $X\in A$ and $Y\in \mathcal{Z}$. Then for any $Z\in \mathfrak{g}$,\ $2g(\nabla_X Y,Z) = g([X,Y],Z) +g([Z,X],Y) +g([Z,Y],X) = \omega([Z,X],JY) =0$ by the same arguments as for the previous point.

Let us consider the last statement. Let $X,Y\in B \oplus \mathcal{Z}$. Then for any $Z\in \mathfrak{g}$, $2g(\nabla_X Y,Z) = g([X,Y],Z) +g([Z,X],Y) +g([Z,Y],X) = \omega([Z,X],JY)+ \omega([Z,Y],JX) =0$ by the same arguments as for the previous point.
\end{proof}

\begin{corollary} \label{3.3}
Under the suppositions of theorem \ref{246}, if the vector $X$ lies in an ideal $\mathfrak{a}_2(J)=B\oplus\mathcal{Z}$, then $R(X, Y)Z = R(Z,Y)X =0$, \  for all $Y, Z\in \mathfrak{g}$.
\end{corollary}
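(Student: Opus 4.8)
The plan is to reduce the two assertions to a single one: I will prove $R(X,Y)Z=0$ for $X\in\mathfrak{a}_2(J)=B\oplus\mathcal{Z}$ and all $Y,Z$, and then obtain $R(Z,Y)X=0$ at no extra cost from the pair symmetry $g(R(U,V)P,Q)=g(R(P,Q)U,V)$ of the Levi-Civita curvature tensor. Indeed, writing $g(R(Z,Y)X,W)=g(R(X,W)Z,Y)$ and using that the right-hand side vanishes once $R(X,\,\cdot\,)\,\cdot\,=0$ is established, the nondegeneracy of $g$ forces $R(Z,Y)X=0$. Since $\mathcal{Z}=\mathfrak{a}_1(J)$ by the type-$(2,4,6)$ identification $\mathfrak{a}_k(J)=\mathfrak{g}_k$, the case $X\in\mathcal{Z}$ is already contained in Corollary \ref{1.8}; so by linearity the only substantive case is $X\in B$, which I would treat by examining the three summands of $R(X,Y)Z=\nabla_X\nabla_Y Z-\nabla_Y\nabla_X Z-\nabla_{[X,Y]}Z$ separately.

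The first step is to extract from Theorem \ref{246} three structural facts about $\nabla$. Decomposing both arguments into their $A$-, $B$-, and $\mathcal{Z}$-components and reading off the four bullets of Theorem \ref{246}, every product $\nabla_U V$ has vanishing $A$-component, i.e. $\nabla_U V\in B\oplus\mathcal{Z}$ for \emph{all} $U,V\in\mathfrak{g}$. Alongside this I would record that $\nabla_X W=0$ whenever $X\in B$ and $W\in B\oplus\mathcal{Z}$ (the fourth bullet, commutativity of $\mathfrak{g}_2$) and that $\nabla_X W\in\mathcal{Z}$ whenever $X\in B$, $W\in A$ (the second bullet).

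With these facts the three terms vanish by inspection. In the first term $\nabla_Y Z$ already lies in $B\oplus\mathcal{Z}$, so $\nabla_X(\nabla_Y Z)=0$ by the fourth-bullet fact. In the second term $\nabla_X Z$ reduces to $\nabla_X Z_A\in\mathcal{Z}=\mathfrak{a}_1(J)$ by the second-bullet fact, whence $\nabla_Y(\nabla_X Z)=0$ by Lemma \ref{1.7}. In the third term $X\in B\subset\mathfrak{g}_2$ gives $[X,Y]\in[\mathfrak{g}_2,\mathfrak{g}]\subseteq\mathfrak{g}_1=\mathcal{Z}=\mathfrak{a}_1(J)$, so $\nabla_{[X,Y]}Z=0$, again by Lemma \ref{1.7}. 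Summing, $R(X,Y)Z=0$.

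There is no genuinely hard step here; the whole argument is driven by Theorem \ref{246}, Lemma \ref{1.7}, and the standard curvature symmetries. The only points demanding care are the component bookkeeping—verifying that $\nabla$ truly never produces an $A$-component, and consistently using the identifications $\mathfrak{a}_1(J)=\mathcal{Z}$ and $\mathfrak{a}_2(J)=B\oplus\mathcal{Z}$ valid for type-$(2,4,6)$ algebras—and the closing passage from $R(X,Y)Z=0$ to $R(Z,Y)X=0$, which is what the statement literally demands and should therefore be spelled out explicitly via the pairing symmetry.
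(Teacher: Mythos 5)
Your proof is correct and follows essentially the same route the paper intends for this corollary: term-by-term vanishing of $\nabla_X\nabla_YZ-\nabla_Y\nabla_XZ-\nabla_{[X,Y]}Z$ using the four bullets of Theorem \ref{246}, Lemma \ref{1.7}, and the identification $\mathfrak{a}_1(J)=\mathcal{Z}$, $\mathfrak{a}_2(J)=B\oplus\mathcal{Z}$ valid for type-$(2,4,6)$ algebras with nilpotent $J$. The only cosmetic difference is that you obtain $R(Z,Y)X=0$ from the pair symmetry of the curvature tensor (valid for pseudo-Riemannian metrics, so this is fine), whereas the same term-by-term analysis also yields it directly, since $\nabla_YX\in\mathcal{Z}=\mathfrak{a}_1(J)$ and $[Z,Y]\in C^1\mathfrak{g}\subseteq B\oplus\mathcal{Z}$ kill all three terms.
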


\begin{corollary} \label{Ricci-zero}
Under the suppositions of theorem \ref{246}, for any $X, Y, Z\in \mathfrak{g}$, the condition $R(X, Y)Z\in \mathcal{Z}$ is fulfilled.
Therefore the pseudo-Riemannian norm of the curvature tensor is equal to zero. According to the expansion $\mathfrak{g}=A\oplus B\oplus \mathcal{Z}$ we choose the bases $\{e_1, e_2\}$, $\{e_3, e_4 \}$ and $\{e_5, e_6 \}$.
Then the curvature tensor can have within its symmetries only four non-zero components, $R_{1,2,1}^5$, $R_{1,2,1}^6$, $R_{1,2,2}^5$, $R_{1,2,2}^6$. In particular, the Ricci tensor is equal to zero.
\end{corollary}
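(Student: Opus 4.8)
The plan is to first localize, using the symmetries of $R$ together with Corollary \ref{3.3}, exactly which curvature components can survive, and then to read off from Theorem \ref{246} where the surviving curvature takes its values. First I would argue that $R(X,Y)Z = 0$ unless all three arguments lie in $A$. Indeed, Corollary \ref{3.3} gives $R(X,Y)Z = 0$ whenever $X \in B \oplus \mathcal{Z}$ and also whenever $Z \in B \oplus \mathcal{Z}$; combining the former with the antisymmetry $R(X,Y) = -R(Y,X)$ forces the second argument into $A$ as well. Hence a nonzero $R(X,Y)Z$ requires $X, Y, Z$ all to have nonzero $A$-components. Since $\dim A = 2$ with basis $\{e_1, e_2\}$ and $R(\cdot,\cdot)$ is alternating in its first two slots, the only surviving curvature operator is $R(e_1, e_2)$, acting on $e_1$ or $e_2$.

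Next I would show that for $X, Y, Z \in A$ one has $R(X,Y)Z \in \mathcal{Z}$, by tracing each term of $R(X,Y)Z = \nabla_X\nabla_Y Z - \nabla_Y\nabla_X Z - \nabla_{[X,Y]}Z$ through the connection table of Theorem \ref{246}. Concretely: $\nabla_Y Z \in B\oplus\mathcal{Z}$, and applying $\nabla_X$ with $X\in A$ to a vector of $B\oplus\mathcal{Z}$ lands in $\mathcal{Z}$ (its $B$-part goes to $\mathcal{Z}$, its $\mathcal{Z}$-part is killed); likewise $[X,Y]\in[A,A]\subset B\oplus\mathcal{Z}$ and $\nabla_{[X,Y]}Z \in \mathcal{Z}$. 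This establishes the first assertion $R(X,Y)Z\in\mathcal{Z}$ for all $X,Y,Z$, and in the chosen basis it immediately restricts the possibly-nonzero components to $R^5_{1,2,1},\,R^6_{1,2,1},\,R^5_{1,2,2},\,R^6_{1,2,2}$.

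For the scalar square I would first pin down the block form of the metric. Since $\mathfrak{a}_1(J)=\mathcal{Z}$ and $C^1\mathfrak{g}\oplus J(C^1\mathfrak{g})=B\oplus\mathcal{Z}$ in the type $(2,4,6)$ case, Corollary \ref{1.6} yields $g(B\oplus\mathcal{Z},\mathcal{Z})=0$. Thus in the basis $\{e_1,e_2\mid e_3,e_4\mid e_5,e_6\}$ the Gram matrix has its whole $\mathcal{Z}\times(B\oplus\mathcal{Z})$ corner equal to zero, and nondegeneracy together with the $\omega$-duality of $A$ and $\mathcal{Z}$ forces the $A$–$\mathcal{Z}$ block to be invertible. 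Inverting this block-anti-triangular matrix, the crucial consequence is that the $A$–$A$ block of $g^{-1}$ vanishes, i.e. $g^{ij}=0$ for $i,j\in\{1,2\}$. Then $g(R,R) = \sum R_{abcd}\,g^{aa'}g^{bb'}g^{cc'}g^{dd'}R_{a'b'c'd'}$ vanishes termwise: a nonzero $R_{abcd}$ (and likewise $R_{a'b'c'd'}$) forces all its indices into $\{1,2\}$, whereupon each raising factor such as $g^{aa'}$ pairs two indices in $\{1,2\}$ and is therefore zero. Finally, in the Ricci contraction $\mathrm{Ric}_{jk}=\sum_i R^i_{ijk}$ the summed upper index lies in $\{5,6\}$ (the output is in $\mathcal{Z}$) while the matched first lower index lies in $\{1,2\}$ (an input in $A$), so no term survives and $\mathrm{Ric}=0$.

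The main obstacle is the metric-inversion step: one must verify that the isotropy and $\omega$-duality hypotheses, via Corollary \ref{1.6}, really force the $A$–$A$ block of the \emph{inverse} metric $g^{-1}$ to vanish, rather than merely that of $g$ itself. Once this is in hand, everything else is routine bookkeeping with the disjoint index ranges $\{1,2\}$ for lowered indices and $\{5,6\}$ for raised ones.
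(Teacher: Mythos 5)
Your proposal is correct and follows essentially the same route the paper intends: the paper leaves this corollary's proof implicit as a direct consequence of Corollary \ref{3.3}, the connection table of Theorem \ref{246}, and the orthogonality relation of Corollary \ref{1.6}, which are exactly the three ingredients you use (localization of the curvature to arguments in $A$, values in $\mathcal{Z}$, and the block structure of $g$). Your block-inversion observation that the $A$--$A$ block of $g^{-1}$ vanishes is the right way to make the paper's ``therefore the norm is zero'' rigorous, and the index bookkeeping for $g(R,R)$ and the Ricci contraction is sound.
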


\begin{corollary} \label{Par3.3}
Under the suppositions of theorem \ref{246}, if the compatible almost complex structure $J$ has a block type (\ref {J6Bl}), then the curvature tensor of the associated metric $g=\omega \cdot J$ does not depend on the free parameters $\psi_{i1}$ and $\psi_{i2}$,for $i=3,4,5,6$.
\end{corollary}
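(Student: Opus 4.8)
The plan is to use Corollary \ref{Ricci-zero}, which tells us that the entire curvature tensor is carried by the two vectors $R(e_1,e_2)e_1$ and $R(e_1,e_2)e_2$, both lying in $\mathcal{Z}$. Hence it suffices to prove that these two vectors do not involve the parameters $\psi_{i1},\psi_{i2}$ for $i=3,4,5,6$. Expanding $R(e_1,e_2)e_j=\nabla_{e_1}\nabla_{e_2}e_j-\nabla_{e_2}\nabla_{e_1}e_j-\nabla_{[e_1,e_2]}e_j$ and invoking Theorem \ref{246}, I would first reduce each term: since $\nabla_{e_a}e_b\in B\oplus\mathcal{Z}$ and $\nabla_{e_a}$ annihilates $\mathcal{Z}$ while sending $B$ into $\mathcal{Z}$, only the $B$-components of the first covariant derivatives survive; and since $[e_1,e_2]\in B\oplus\mathcal{Z}$ with $\nabla_{(\,\cdot\,)}e_j$ vanishing on $\mathcal{Z}$, only $[e_1,e_2]_B$ contributes. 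Thus the curvature is built solely from the quantities $(\nabla_{e_a}e_b)_B$, $\nabla_{e_a}w$ and $\nabla_w e_j$ with $w\in B$ (the last being tied to $\nabla_{e_j}w$ by the parameter-free bracket $[e_j,w]$).

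The next step is to isolate the offending parameters. Write $J=\tilde J+K$, where $K e_1=\sum_{i\ge 3}\psi_{i1}e_i$ and $K e_2=\sum_{i\ge 3}\psi_{i2}e_i$ lie in $B\oplus\mathcal{Z}$, $K$ vanishes on $B\oplus\mathcal{Z}$, and $\tilde J$ carries none of the parameters $\psi_{i1},\psi_{i2}$ ($i\ge 3$). The goal becomes showing that $K$ drops out of each covariant derivative listed above when it is computed from the Koszul-type identity
$$
2\omega(\nabla_X Y,Z)=\omega([X,Y],Z)-\omega([JZ,X],JY)-\omega([JZ,Y],JX).
$$
To read off the $B$-component of $\nabla_{e_a}e_b$ I would probe with $Z\in B$, and to read off the $\mathcal{Z}$-component of $\nabla_{e_a}w$ I would probe with $Z\in A$; this is legitimate because $\omega$ is nondegenerate on $B$ and $A,\mathcal{Z}$ are $\omega$-dual, while the complementary pairings $\omega(\mathcal{Z},B)$ and $\omega(\mathcal{Z},\mathcal{Z})$ vanish---the latter by isotropy of $\mathcal{Z}$, the former because Corollary \ref{1.6} gives $g(B,\mathcal{Z})=0$, that is $\omega(B,J\mathcal{Z})=\omega(B,\mathcal{Z})=0$ since $\mathcal{Z}$ is $J$-invariant.

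With these probes fixed, every occurrence of $K$ is killed by one of two mechanisms, both resting on the fact that $B\oplus\mathcal{Z}=\mathfrak{g}_2$ is abelian and equals $\{X:[X,\mathfrak{g}]\subseteq\mathcal{Z}\}$. First, where $K$ enters through $JZ$ with $Z\in A$, the bracket $[KZ,e_a]$ lands in $\mathcal{Z}$ and $[KZ,w]=0$, after which the orthogonality $\omega(\mathcal{Z},B\oplus\mathcal{Z})=0$ annihilates the corresponding term; moreover for a probe $Z\in B$ the vector $JZ$ involves only $\psi_{i3},\psi_{i4}$, not our parameters. Second, where $K$ enters through $JX$ or $JY$ with $X,Y\in A$, the partner bracket $[JZ,\,\cdot\,]$ already lies in $\mathcal{Z}$, so the image $Ke_a\in B\oplus\mathcal{Z}$ is again orthogonal to it. In every case the Koszul identity collapses to its $\tilde J$-version, so the relevant covariant derivatives, and hence $R(e_1,e_2)e_1$ and $R(e_1,e_2)e_2$, are independent of $\psi_{i1},\psi_{i2}$, $i=3,4,5,6$.

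I expect the main obstacle to be the bookkeeping in the last step: one must make sure that every place where a parameter $\psi_{i1}$ or $\psi_{i2}$ could enter is covered---including the indirect route through the $J$-images of the probe vectors---and verify in each case that the surviving bracket lands in $\mathcal{Z}$ or vanishes, so that the orthogonality relations apply. This is precisely the second-level analogue of the argument in Corollary \ref{ParRiem}, the new ingredient being the use of the abelian ideal $\mathfrak{g}_2=B\oplus\mathcal{Z}=\mathfrak{a}_2(J)$ rather than the centre alone.
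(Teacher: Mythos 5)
Your proof is correct and takes essentially the approach the paper intends (the paper states this corollary without an explicit proof): you combine the constraints on $\nabla$ from Theorem \ref{246} and the reduction of the curvature to $R(e_1,e_2)e_1$, $R(e_1,e_2)e_2$ from Corollary \ref{Ricci-zero} with exactly the Koszul-identity mechanism used in the proof of Corollary \ref{ParRiem}, splitting $J$ into a parameter-free part plus the offending block $K$ and killing every occurrence of $K$ via the abelianness of $\mathfrak{g}_2=B\oplus\mathcal{Z}$ and the orthogonality $\omega(\mathcal{Z},B\oplus\mathcal{Z})=0$. As you note, this is the second-level analogue of the argument for Corollary \ref{ParRiem}, and your case analysis covers all the places where the parameters $\psi_{i1},\psi_{i2}$ ($i=3,\dots,6$) could enter.
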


Let us notice that parameters $\psi_{ij}$ of the complex structure $J$ are connected by three conditions: a compatibility condition, an integrability condition and the fact that $J^2=-1$. Therefore from what has been specified above some of the parameters can be expressed through others.
If, as a result, among $\psi_{i1}$ and $\psi_{i2}$, $i=3,4,5,6$, there were independent parameters, it would be possible to set them at zero as the curvature does not depend on them. We remember that, according to corollary \ref{ParRiem6}, the curvature $R(X,Y)$ of the associated metric also does not depend on the free parameters $\psi_{51}$, $\psi_{52}$, $\psi_{53}$, $\psi_{54}$, $\psi_{61}$,  $\psi_{62}$,  $\psi_{63}$, $\psi_{64}$.

 \vspace {1mm}
Similar statements are true for type (2,6) Lie algebras. A type (4,6) Lie algebra is the direct product of a four-dimensional Lie algebra and $\mathbb{R}^2$. The case (3,6) is the most complicated.

 \vspace {1mm}
\textbf{Remark 2.} There is an obvious generalization of theorem \ref{246} to the case where $\dim \g > \, 6$.
It is necessary to assume that there exist sequences of ideals $\g_1\subset \g_2\subset \dots \subset \g_n = \g$ with dimensions of $2,4, \dots, n $, invariant under $J$.
Choosing additional two-dimensional subspaces $A_i$ to these ideals, we obtain the expansion $\g =A_1\oplus A_2\oplus \dots \oplus A_n =\g_1 =\mathcal{Z}$. The form of $\omega$ should be such that on each subspace $A_i$ (except, maybe, on $A_{n/2}$) it is degenerated, and also the subspaces $A_i$ and $A_{n-i}$ are $\omega$-dual.

 \vspace {1mm}
\textbf{Remark 3.} All calculations are made in the Maple system using the formulas specified at the end of the paper.

\section{Lie algebras of type $(2,4,6)$}
Let us consider all six-dimensional nilpotent Lie algebras of type (2,4,6). According to theorem \ref{PsKahl_6}, there are seven such Lie algebras which possess a pseudo-K\"{a}hler structure. We remember that the number of each algebra corresponds to its number in the classification list in \cite{Goze-Khakim-Med}.

\subsection{The Lie group $G_{14}$}
Let us consider a six-dimensional Lie group $G_{14}$ which has a Lie algebra $\g_{14}$ with non-trivial Lie brackets (see \cite{Goze-Khakim-Med}:  $[X_{1},X_{2}]=X_{4}$,\ $[X_{1},X_{4}]=X_{6}$,\  $[X_{1},X_{3}]=X_{5}$. The algebra $\g_{14}$ has (see \cite{Goze-Khakim-Med}) three symplectic structures which are noted using the dual base $\{\alpha^{i}\}$ as follows:

$\omega _{1}=\alpha^{1}\wedge \alpha^{6}+\alpha^{2}\wedge \alpha^{4}+\alpha^{3}\wedge \alpha^{5}$,

$\omega _{2}=\alpha^{1}\wedge \alpha^{6}-\alpha^{2}\wedge \alpha^{4} +\alpha^{3}\wedge \alpha^{5}$,

$\omega _{3}=\alpha^{1}\wedge \alpha^{6}+\alpha^{2}\wedge \alpha^{5}+\alpha^{3}\wedge \alpha^{4}$.\\
Left-invariant complex structures on this group are found in an explicit form in the work of Magnin \cite{Mag-3} (the algebra $M1$). We will use Magnin's results, so we will make the replacement: $X_2=-e_1$,\ $X_1=e_2$,\ $X_3=e_3$,\ $X_4=e_4$,\ $X_6=e_5$,\ $X_5=e_6$. Then the non-trivial brackets are given in \cite{Mag-3}:

 $[e_1,e_2]=e_4$, $[e_2,e_3] = e_6$, $[e_2,e_4] = e_5$,\\
and the symplectic forms become:

$\omega _{1}=-e^{1}\wedge e^{4}+e^{2}\wedge e^{5} +e^{3}\wedge e^{6}$,

$\omega _{2}=e^{1}\wedge e^{4} +e^{2}\wedge e^{5}+e^{3}\wedge e^{6}$,

$\omega _{3}=-e^{1}\wedge e^{6}+e^{2}\wedge e^{5}+e^{3}\wedge e^{4}$.

In \cite{Mag-3} it is shown that the Lie group $G_{14}$ has a 10-parametrical set of left-invariant complex structures. A direct check of the compatible property $\omega(JX,Y)+\omega(X,JY)$, $\forall \, X,Y\in \g$, shows that for first two symplectic forms there are no compatible complex structures on the group $G_{14}$. For the form $\omega_3$ the compatible complex structure depends on 6 parameters and is of the form:
$$
J = \left(\begin{array}{cccccc}
      \psi_{11} & \psi_{12} & 0 & 0 & 0 & 0 \\
      -\frac{\psi_{11}^2+1}{\psi_{12}} & -\psi_{11} & 0 & 0 & 0 & 0 \\
      \frac{\psi_{42}(\psi_{11}^2+1) -2\psi_{41}\psi_{12}\psi_{11}}{\psi_{12}^2} & -\psi_{41} & -\psi_{11} & -\frac{\psi_{11}^2+1}{\psi_{12}} & 0 & 0 \\
      \psi_{41} & \psi_{42} & \psi_{12} & \psi_{11} & 0 & 0 \\
      \psi_{51} & J_{52} & \psi_{42} & \psi_{41} & \psi_{11} & \psi_{12}\\
      \psi_{61} & -\psi_{51}  & -\psi_{41} & \frac{\psi_{42}(\psi_{11}^2+1)-
      2\psi_{41}\psi_{12}\psi_{11}}{\psi_{12}^2} & -\frac{\psi_{11}^2+1}{\psi_{12}}& -\psi_{11} \\
    \end{array}\right),
$$
where $J_{52}= \frac{-2\psi_{11}\psi_{12}(\psi_{42}\psi_{41} -\psi_{12}\psi_{51}) +\psi_{42}^2(\psi_{11}^2 +1) +\psi_{12}^2(\psi_{41}^2 +\psi_{12}\psi_{61})}{(\psi_{11}^2+1)\psi_{12}}$.

The curvature tensor of the metric $g(X,Y)=\omega_3(X,JY)$ is equal to zero for all values of the parameters. Therefore we choose the elementary pseudo-K\"{a}hler structure with zero values of the free parameters and $\psi_{12}=-1$. Then the canonical pseudo-K\"{a}hler structure is set as follows:

$J(e_1) = e_2,\quad J(e_3) = e_4,\quad
J(e_5) = e_6$,

$g=-e^1\,e^5 -e^2\,e^6 -(e^3)^2 -(e^4)^2.$


\subsection{The Lie group $G_{21}$}
The Lie algebra $\g_{21}$ is defined by: $[e_1,e_2] = e_4$, $[e_1,e_4] = e_6$, $[e_2,e_3] = e_6$. This Lie algebra has two symplectic structures \cite{Goze-Khakim-Med}:

$\omega_1 = e^1\wedge e^6 +e^2\wedge e^4 -e^3\wedge e^4 -e^3\wedge e^5,$

$\omega_2 = e^1\wedge e^6 +e^2\wedge e^5 -e^3\wedge e^4.$

A direct check shows that for the first structure $\omega_1 = e^1\wedge e^6 +e^2\wedge e^4 -e^3\wedge e^4 -e^3\wedge e^5$ there are no compatible complex structures.
We will consider the second symplectic structure $\omega_2$.
There is a multiparametrical set of compatible complex structures. Taking into account the results of theorem \ref{246} and corollaries \ref{1.8}, \ref{ParRiem6} and \ref{Ricci-zero}, we find by direct evaluation that the tensor of curvature of the associated metric $g(X,Y)=\omega(X,JY)$ depends on two parameters $\psi_{11}$ and $\psi_{12}\ne 0$ and has the following non-zero components:\
$R_{1, 2, 1}^6~=~1~+~\psi_{11}^2$,\
$R_{1, 2, 2}^6 = \psi_{12} \psi_{11}$,\ $R_{1, 2, 1}^5 = \psi_{12}\psi_{11}$,\
$R_{1, 2, 2}^5 = \psi_{12}^2$.
Therefore the semicanonical complex structure is set as follows:

$J(e_1) = \xi_{11}\, e_1 -\frac{\xi_{11}^2+1}{\xi_{12}} \, e_2,\qquad
J(e_2) = \psi_{12}\, e_1 -\psi_{11}\, e_2$,

$J(e_3) = \psi_{11}\, e_3 +\frac{\xi_{11}^2+1}{\xi_{12}}\, e_4,\qquad
J(e_4) = -\xi_{12}\, e_3 -\psi_{11}\, e_4$,

$J(e_5) = \psi_{11}\, e_5 +\frac{\xi_{11}^2+1}{\xi_{12}}\, e_6, \qquad
J(e_6) = -\psi_{12}\, e_5 -\psi_{11}\, e_6$.\\
The corresponding associated metric is:
$$
g {=} \left[\!\!\begin{array}{cccccc}
     0 & 0 & 0 & 0 & \frac{\psi_{11}^2+1}{\psi_{12}} & -\psi_{11} \\
     0 & 0 & 0 & 0 & \psi_{11} & -\psi_{12} \\
     0 & 0 & -\frac{\psi_{11}^2+1}{\psi_{12}} & \psi_{11} & 0 & 0 \\
     0 & 0 & \psi_{11} & -\psi_{12} & 0 & 0 \\
     \frac{\psi_{11}^2+1}{\psi_{12}} & \psi_{11} & 0 & 0 & 0 & 0 \\
     -\psi_{11} & -\psi_{12} & 0 & 0 & 0 & 0 \\
    \end{array}\!\!\right].
$$
On omitting the index of the curvature tensor, it turns out that there is only one (within symmetries) non-zero component of the tensor of curvature $R_{1, 2, 1, 2} =-\psi_{12}$.  Then, supposing $\psi_{12}=-a\ne 0$ and $\psi_{11}=0$, we find the following canonical complex structure and pseudo-K\"{a}hler metric with curvature $R_{1, 2, 1, 2}=a$ on a Lie algebra $\mathfrak{h}_{21}$:

$$
J(e_2) = -a\, e_1,\ J(e_4) = a\, e_3,\ J(e_6) = a\, e_5,
$$
$$
g= -\frac 2a\, e^1\cdot e^5 +2a\, e^2\cdot e^6  -\frac 1a\, (e^3)^2+ a\, (e^4)^2.
$$

\subsection{The Lie group $G_{13}$}
The Lie algebra $\mathfrak{g}_{13}$ is defined by: $[e_1,e_2] =e_4$, $[e_1,e_3]=e_5$, $[e_1,e_4]=e_6$, $[e_2,e_3]=e_6$.
This Lie algebra has three symplectic structures \cite{Goze-Khakim-Med}. Left-invariant complex structures on this group are discovered in an explicit form in the work of Magnin \cite{Mag-3} (algebra $M6$). In order to use the outcomes of Magnin's work, we will rename the base vectors $e_3:=-e_3$, $e_5:=-e_5$, and also find the Lie brackets from Magnin's paper \cite{Mag-3}:

$[e_1,e_2] =e_4$, $[e_1,e_3]=e_5$, $[e_1,e_4]=e_6$, $[e_2,e_3]=-e_6$.

The symplectic structures are:

$\omega_1 =  e^1\wedge e^6 -\lambda e^2\wedge e^5 - (\lambda-1)e^3\wedge e^4$,

$\omega_2 =e^1\wedge e^6 +\lambda e^2\wedge e^4 -e^2\wedge e^5 +e^3\wedge e^5$,

$\omega_3 = e^1\wedge e^6 +e^2\wedge e^4 -\frac 12 e^2\wedge e^5 +\frac 12 e^3\wedge e^4$.

\vspace{1mm}
\textbf{First case.}
We will consider the form $\omega_1= e^1\wedge e^6 -\lambda e^2\wedge e^5 -(\lambda-1)e^3\wedge e^4$. There is a multiparametrical set of the compatible complex structures. Taking into account the outcomes of theorem \ref{246} and corollaries \ref{ParRiem6} and \ref{Ricci-zero}, we find by direct evaluation that the tensor of curvature of the associated metric $g_1(X,Y)=\omega_1(X,J_1Y)$ depends on two parameters $\psi_{11}$ and $\psi_{12}\ne0$ and also has following non-zero components:\
$R_{1, 2, 2}^6 =\frac {(3\lambda -1) \psi_{12} \psi_{11}}{\lambda -1}$,\
$R_{1, 2, 2}^5 =-\frac{(1 + \lambda)(3\lambda -1)\psi_{12}^2}{\lambda (\lambda - 1)}$,\
$R_{1, 2, 1}^6 =\frac{(3 \lambda -1)(1 + \psi_{11}^2)}{\lambda^2 -1}$,\
$R_{1, 2, 1}^5 =-\frac{(3 \lambda - 1) \psi_{12} \psi_{11}}{\lambda (\lambda - 1)}$.
After omitting the index, one non-zero component turns out to be $R_{1, 2, 1, 2} = -\frac{(3 \lambda -1) \psi_{12}}{\lambda -1}$. Therefore the semicanonical complex structure is set out as follows:

$J_1(e_1) = \psi_{11}\,e_1 -\frac{1+{\psi_{11}}^2}{(1+\lambda)\psi_{12}}\, e_2,\quad
J_1(e_2) = (1+\lambda)\psi_{12}\, e_1 -\psi_{11}\, e_2$,\

$J_1(e_3) = \psi_{11}\, e_3 -\frac{1+{\psi_{11}}^2}{\psi_{12}}\, e_4$,\quad
$J_1(e_4) = \psi_{12}\, e_3 -\psi_{11}\, e_4$,\

$J_1(e_5) = \psi_{11}\, e_5 -\frac{\lambda(1+{\psi_{11}}^2)}{(1+\lambda)\psi_{12}}\, e_6,\quad
J_1(e_6) = \frac{(1+\lambda)\psi_{12}}{\lambda}\, e_5 -\psi_{11}\, e_6$.

\vspace{1mm}
The corresponding pseudo-K\"{a}hler metric comes from the formula $g_1=\omega_1\circ J_1$. Suppose that $\psi_{12}=a\ne 0$ and $\psi_{11}=0$. We find the following canonical complex structure and the pseudo-K\"{a}hler metric of curvature $R_{1, 2, 1, 2}=-\frac{(3 \lambda -1) a}{\lambda -1}$ on a Lie algebra $\mathfrak{h}_{13}$:

$$
J_1(e_2) = (1+\lambda)a\, e_1,\quad J_1(e_4) = a\, e_3,\quad
J_1(e_6) = \frac{(1+\lambda)a}{\lambda}\, e_5,
$$
$$
g_1=  \left[ \begin {array}{cccccc} 0&0&0&0&-{\frac {\lambda}{ \left( 1+
\lambda \right) a}}&0\\
\noalign{\medskip}0&0&0&0&0&- \left( 1+\lambda \right) a\\ \noalign{\medskip}0&0&{\frac {\lambda-1}{a}}&0&0&0\\
\noalign{\medskip}0&0&0& \left(\lambda-1 \right) a&0&0\\ \noalign{\medskip}-{\frac {\lambda}{(1+\lambda )a}}&0&0&0&0&0\\ \noalign{\medskip}0&- \left( 1+\lambda \right)a&0&0&0&0
\end {array} \right].
$$

\vspace{1mm}
\textbf{Second case.}
For the symplectic form $\omega_2 =e^1\wedge e^6 +\lambda e^2\wedge e^4 -e^2\wedge e^5 +e^3\wedge e^5$ there are no compatible complex structures.

\vspace{1mm}
\textbf{Third case.}
The symplectic structure is:
$\omega_3 = e^1\wedge e^6 +e^2\wedge e^4 -\frac 12 e^2\wedge e^5 +\frac 12 e^3\wedge e^4$.
For any compatible complex structure and its associated metric, the curvature tensor depends on two parameters $\psi_{11}$ and $\psi_{12}\ne 0$:
$R_{1, 2, 2}^5 =\frac {4 \psi_{12}^2}{3}$, \
$R_{1, 2, 1}^5 =\frac{4 \psi_{12} \psi_{11}}{3}$, \
$R_{1, 2, 2}^6 =-\frac{2 \psi_{12} \psi_{11}}{3}$,\
$R_{1, 2, 1}^6 =-\frac{2(1+ \psi_{11}^2)}{3}$.

Therefore the semicanonical pseudo-K\"{a}hler structure is as follows:
$$
J_3 =  \left[ \begin {array}{cccccc} \psi_{11}&\psi_{12}&0&0&0&0 \\
\noalign{\medskip}-{\frac {{\psi_{11}}^{2}+1}{\psi_{12}}}&-
\psi_{11}&0&0&0&0\\
 \noalign{\medskip}0&0&\psi_{11}&2\,\psi_{12}/3&0&0\\
 \noalign{\medskip}0&0&-3\,{\frac {{\psi_{11}}^{2}+1}{2\,
\psi_{12}}}&-\psi_{11}&0&0\\ \noalign{\medskip}0&0&-3\,{\frac {{\psi_{11}}^{2}+1}{
\psi_{12}}}&-4\,\psi_{11}&\psi_{11}&2\,\psi_{12}\\
\noalign{\medskip}0&0&0&{\frac {{\psi_{11}}^{2}+1}{\psi_{12}}}
&-{\frac {{\psi_{11}}^{2}+1}{2\,\psi_{12}}}&-\psi_{11}
\end {array} \right],
$$
$$
g_3= \left[ \begin {array}{cccccc} 0&0&0&{\frac {{\psi_{11}}^{2}+1}{
\psi_{12}}}&-{\frac {{\psi_{11}}^{2}+1}{2\,\psi_{12}}} & -\psi_{11}\\ \noalign{\medskip}0&0&0&\psi_{11}&-\psi_{11}/2&
-\psi_{12}\\
\noalign{\medskip}0&0&-3\,{\frac {{\psi_{11}}^{2}+1}{4\,
\psi_{12}}}&-\psi_{11}/2&0&0\\
 \noalign{\medskip}{\frac {{\psi_{11}}^{2}+1}{
\psi_{12}}}&\psi_{11}&-1/2\,\psi_{11}&-\psi_{12}/3&0&0\\
 \noalign{\medskip}-{\frac {{\psi_{11}}^{2}+1}{2\,
 \psi_{12}}}&-\psi_{11}/2&0&0&0&0\\
 \noalign{\medskip}-\psi_{11}& -\psi_{12}&0&0&0&0\end {array} \right]
 $$
On omitting the index of the curvature tensor, it turns out that there is only one (within symmetries) non-zero component of the tensor of curvature  $R_{1, 2, 1, 2} =\frac {2\psi_{12}}{3}$. Then, supposing $\psi_{12}= -a\ne 0$ and $\psi_{11}=0$, we find the following canonical complex structure and pseudo-K\"{a}hler metric of curvature $R_{1, 2, 1, 2}=-\frac {2\,a}{3}$ on a Lie algebra $\mathfrak{h}_{13}$ with $J_3$-invariant 2-planes $\{e_1,e_2\}$ and $\{e_5,e_6\}$:

$J_3(e_2) = -a\, e_1$,\
$J_3(e_3) = \frac {3}{2\,a}\, e_4 +\frac{3}{a}\, e_5$, \
$J_3(e_4) = -\frac{2a}{3}\, e_3 -\frac{1}{a}\, e_6$, \
$J_3(e_6) = -2\,a\, e_5$,

$$
g_3=\left[ \begin {array}{cccccc}
0&0&0&-\frac 1a&\frac{1}{2a}&0\\
0&0&0&0&0&a\\
0&0&\frac{3}{4a}&0&0&0\\
-\frac 1a&0&0&\frac{a}{3}&0&0\\
\frac{1}{2a}&0&0&0&0&0\\
0&a&0&0&0&0\end {array} \right].
$$

\subsection{The Lie group $G_{15}$}
The Lie algebra $\mathfrak{g}_{15}$ is defined in \cite{Goze-Khakim-Med}:
$[X_{1},X_{2}]=X_{4}$,\ $[X_{1},X_{4}]=X_{6}$, \ $[X_{2},X_{3}]=X_{5}$. The Lie algebra has two symplectic structures:

$\omega _{1}=-\alpha^{1}\wedge \alpha^{5}+\alpha^{1}\wedge \alpha^{6}+\alpha^{2}\wedge \alpha^{5}+\alpha^{3}\wedge \alpha^{4},$

$\omega _{2}=\alpha^{1}\wedge \alpha^{6}+\alpha^{2}\wedge \alpha^{4}+\alpha^{3}\wedge \alpha^{5}.$

This is a Lie algebra $M7$ for which Magnin, in \cite{Mag-3}, discovered the complex structures in an explicit form. To use these outcomes, we will make the replacement: $X_1:=e_2$,\  $X_2:=-e_1$,\ $X_5:=-e_6$,\ $X_6:=e_5$. Then the Lie bracket relations and symplectic structures become:
\vspace{1mm}

$[e_1,e_2]=e_{4}$,\ $[e_2,e_{4}]=e_5$, \ $[e_1,e_{3}]=e_6$,
\vspace{1mm}

$\omega _{1}=e^1\wedge e^6+ e^2\wedge e^6+e^2\wedge e^5 +e^{3}\wedge e^{4},$

$\omega _{2}=-e^1\wedge e^{4}+e^2\wedge e^5-e^{3}\wedge e^6.$

\vspace{1mm}
\textbf{First case.}
The symplectic structure is: $\omega_1 =  e^2\wedge e^6 +e^2\wedge e^5 +e^1\wedge e^6 +e^3\wedge e^4$. For any compatible complex structure, the curvature tensor of the associated metric depends on two parameters $\psi_{11}$ and $\psi_{12}\ne 0$. After omitting the index there is one non-zero component:
\[
R_{1, 2, 1, 2} =-\frac{\psi_{11}^4+\psi_{11}^3\,\psi_{12} +2\,\psi_{11}^2-2\,\psi_{12}^2\,\psi_{11}^2+ \psi_{11}\,\psi_{12}+1}{\psi_{12}\,(-2\,\psi_{11}\psi_{12}+1+\psi_{11}^2)}.
\]

Setting the remaining free parameters $\psi_{ij}$ to zero, we find the canonical complex structure $J_1$:

$J_1(e_1) = \psi_{11}\, e_1 -{\frac {1+{\psi_{11}}^{2}}{\psi_{12}}}\, e_2$,

$J_1(e_2) = \psi_{12}\, e_1 -\psi_{11}\, e_2$,

$J_1(e_3) = -{\frac {{\psi_{11}}^{3}-
{\psi_{11}}^{2}\psi_{12}+\psi_{11}+\psi_{12}}{-2\,\psi_{11}
\,\psi_{12}+1+{\psi_{11}}^{2}}}\, e_3 -{\frac { \left( {\psi_{12}}^{2}-2\,\psi_{11}\,\psi_{12}+1+{\psi_{11}}^
{2} \right) \psi_{12}}{-2\,\psi_{11}\,\psi_{12}+1+{\psi_{11}}^
{2}}}\, e_4$,

$J_1(e_4) = \frac {1+2\,{\psi_{11}}^{2}
+{\psi_{11}}^{4}}{\psi_{12}\, (-2\,\psi_{11}\,\psi_{12}+1+{
\psi_{11}}^{2})}\, e_3 +\frac {{\psi_{11}}^{3}-{\psi_{11}}^{2}\psi_{12}+\psi_{11}+
\psi_{12}}{-2\,\psi_{11}\,\psi_{12}+1+{\psi_{11}}^{2}}\, e_4$,

$J_1(e_5) = -\frac {-\psi_{11}\,\psi_{12}+1+
{\psi_{11}}^{2}}{\psi_{12}}\, e_5 +\frac {1+{\psi_{11}}^{2}}{\psi_{12}}
\, e_6$,

$J_1(e_6) = -{\frac {{\psi_{12}}^{2}-2\,
\psi_{11}\,\psi_{12}+1+{\psi_{11}}^{2}}{\psi_{12}}}\, e_5 +{\frac {-\psi_{11}\,\psi_{12}+1+{\psi_{11}}^{2}}{\psi_{12}}}
\, e_6$.
\vspace{2mm}\\
The metric tensor of the pseudo-K\"{a}hler structure is easily found using the formula
$g_1=\omega_1\circ J_1$.

\vspace{1mm}
\textbf{Second case.}
The symplectic structure $\omega _{2}=-e^1\wedge e^{4}+e^2\wedge e^5-e^{3}\wedge e^6$ does not admit a compatible complex structure.


\subsection{The Lie group $G_{11}$}
The Lie algebra $\mathfrak{g}_{11}$ is defined by:

$[e_1,e_2] = e_4$,\ $[e_1,e_4] = e_5$,\ $[e_2,e_3] = e_6$,\ $[e_2,e_4] = e_6$.\\
Its symplectic structure comes from the list in \cite{Goze-Khakim-Med}:

$\omega = e^1\wedge e^6 + e^2\wedge e^5 - e^3\wedge e^4 + \lambda e^2\wedge e^6.$

This is the Lie algebra $M8$, considered in the work of Magnin \cite{Mag-3} and for which he found the complex structures in an explicit form.
There is a multiparametrical set of the compatible complex structures.
For any compatible complex structure $J$ and its associated metric $g=\omega \cdot J$, the curvature tensor depends on three parameters $\lambda$, $\psi_{11}$ and $\psi_{12}\ne 0$.
The canonical compatible complex structure has $J$-invariant 2-plane $\{e_1,e_2\}$, $\{e_3,e_4\}$ and $\{e_5,e_6\}$, however its shape is complicated:

$J(e_1) = \psi_{11}\, e_1 -{\frac {1+{\psi_{11}}^{2}}{\psi_{12}}}\, e_2,\quad
J(e_2)=\psi_{12}\, e_1 -\psi_{11} \, e_2$,

$J(e_3)=-{\frac {2\,{\psi_{12}}^{2}-3\,\psi_{11}\,\lambda\,\psi_{12}
+{\lambda}^{2}(1+{\psi_{11}}^{2})}{\lambda \,\psi_{12}}}\, e_3+{\frac {{\psi_{12}}^{2}-2\,\psi_{11}\,\lambda\,\psi_{12}
+\lambda^{2}(1+\psi_{11}^{2})}{\lambda\, \psi_{12}}}\, e_4$,

$J(e_4)=-{\frac {{\lambda}^{2}({\psi_{11}}^{2}+1)-4\,\psi_{11}\,\lambda\,\psi_{12}
+4\,{\psi_{12}}^{2}}{\psi_{12}\,\lambda}}\, e_3+{\frac {{
\lambda}^{2}({\psi_{11}}^{2}+1)-3\,\psi_{11}\,\lambda\,\psi_{12}+2\,
{\psi_{12}}^{2}}{\psi_{12}\,\lambda}}\, e_4$,

$J(e_5)={\frac {\psi_{11}\,\psi_{12}-
\lambda(1+\psi_{11}^{2})}{\psi_{12}}}\,e_5 +{\frac {1+
{\psi_{11}}^{2}}{\psi_{12}}}\, e_6, $

$J(e_6)=-\frac {{\psi_{12}}
^{2}-2\,\psi_{11}\,\lambda\,\psi_{12}+{\lambda}^{2}+{\lambda}^{2}{
\psi_{11}}^{2}}{\psi_{12}}\,e_5 +\frac {-\psi_{11}\,\psi_{12}+
\lambda+{\psi_{11}}^{2}\lambda}{\psi_{12}}\, e_6. $
\vspace{1mm}

The curvature tensor of associated metric $g=\omega \cdot J$ has the following non-zero component:
$$
R_{1, 2, 1, 2} = -\frac{\lambda^2(\psi_{11}^2+1) -5\lambda \psi_{12}\,\psi_{11} +4\psi_{12}^2}{\lambda \psi_{12}}.
$$


\subsection{The Lie group $G_{10}$}
The Lie algebra $\mathfrak{g}_{10}$ is defined by:

$[e_1,e_2] = e_4$,\ $[e_1,e_4] = e_5$,\ $[e_1,e_3] = e_6$,\ $[e_2,e_4] = e_6$.\\
Its symplectic structure comes from the list in \cite{Goze-Khakim-Med}:

$\omega = e^1\wedge e^6 + e^2\wedge e^5 - e^2\wedge e^6 - e^3\wedge e^4 .$\\

For any compatible complex structure $J$ and its associated metric $g=\omega \cdot J$, the curvature tensor depends on two parameters $\psi_{11}$ and $\psi_{12}\ne 0$.
The canonical compatible complex structure has $J$-invariant 2-plane $\{e_1,e_2\}$, $\{e_3,e_4\}$ and $\{e_5,e_6\}$, however its shape is complicated:

$J(e_1) = \psi_{11}\, e_1 -{\frac {1+{\psi_{11}}^{2}}{\psi_{12}}}\, e_2,\quad
J(e_2)=\psi_{12}\, e_1 -\psi_{11} \, e_2$,

$J(e_3)=-{\frac {\psi_{12}+3\,{
\psi_{11}}^{2}\psi_{12}+2\,{\psi_{12}}^{2}\psi_{11}+\psi_{11}
+{\psi_{11}}^{3}}{2\,{\psi_{12}}^{2}+2\,\psi_{12}\,\psi_{11}+1
+{\psi_{11}}^{2}}}\, e_3 -{\frac {({\psi_{12}}^{2}+2\,\psi_{12}\,\psi_{11}+1+{{
\psi_{11}}}^{2}) \psi_{12}}{2\,{\psi_{12}}^{2}+2\,\psi_{12}\,\psi_{11}+1+{\psi_{11}}^{2}}}\, e_4$,

$J(e_4)={\frac { \left( {\psi_{11}}^{2}+4\,\psi_{12}
\,\psi_{11}+4\,{\psi_{12}}^{2}+1 \right)  \left( 1+{\psi_{11}}^{
2} \right) }{\psi_{12}\, \left( 2\,{\psi_{12}}^{2}+2\,\psi_{12}
\,\psi_{11}+1+{\psi_{11}}^{2} \right) }}\, e_3 +{\frac {\psi_{12}+3\,{\psi_{11}}^{2}\psi_{12}+2\,{\psi_{12}}^{2}\psi_{11}+\psi_{11}
+{\psi_{11}}^{3}}{2\,{\psi_{12}}^{2}+2\,\psi_{12}\,\psi_{11}+1
+{\psi_{11}}^{2}}}\, e_4$,

$J(e_5)={\frac {\psi_{12}\,\psi_{11}+1+{\psi_{11}}^{2}}{\psi_{12}}}\,e_5 +{\frac {1+\psi_{11}^{2}}{\psi_{12}}}\, e_6$.

$J(e_6)=-{\frac {{\psi_{12}}^{2}+2\,\psi_{12}\,\psi_{11}+1+{\psi_{11}}^{2}}{\psi_{12}}}\,e_5 -{\frac {\psi_{12}\,\psi_{11}+1+{\psi_{11}}^{2}}{\psi_{12}}}\, e_6$.
\vspace{1mm}

The curvature tensor of associated metric $g=\omega \cdot J$ has the following non-zero component:
$$
R_{1, 2, 1, 2} =
{\frac {{\psi_{11}}^{4}+4\,\psi_{12}\,{\psi_{11}}^{3}+3\,
{\psi_{11}}^{2}{\psi_{12}}^{2}+2\,{\psi_{11}}^{2}+4\,\psi_{11}\,
\psi_{12}-2\,\psi_{11}\,{\psi_{12}}^{3}+3\,{\psi_{12}}^{2}+1-2
\,{\psi_{12}}^{4}}{\psi_{12}\, \left( 2\,{\psi_{12}}^{2}+2\,
\psi_{11}\,\psi_{12}+1+{\psi_{11}}^{2} \right) }}.
$$

\subsection{The Lie group $G_{12}$}
The Lie algebra $\mathfrak{h}_{12}$ is defined by: $[X_1,X_2] = X_4$, \ $[X_1,X_4] = X_5$, \ $[X_1,X_3] = X_6$, \ $[X_2,X_3] = -X_5$, \ $[X_2,X_4] = X_6$.
Its symplectic structure comes from the list in \cite{Goze-Khakim-Med}:

$\omega = \lambda \alpha^1\wedge \alpha^5 +\alpha^2\wedge \alpha^6 +(\lambda+1)\alpha^3\wedge \alpha^4, \qquad  \lambda\ne 0; -1.$

This is the Lie algebra $M10$, considered in the work of Magnin \cite{Mag-3} and for which he found the complex structures in an explicit form. To use these outcomes, we will make the replacements: $X_1=-e_1$, $X_2=e_2$, $X_3=-e_4$, $X_4=-e_3$, $X_5=e_5$, $X_6=e_6$.
Then the Lie bracket relations and symplectic structures become:
\vspace{1mm}

$[e_1,e_2] = e_3$,\ $[e_1,e_3] = e_5$,\ $[e_1,e_4] = e_6$,\ $[e_2,e_4] = e_5$,\ $[e_2,e_3] = -e_6$.
\vspace{1mm}

$\omega = -\lambda e^1\wedge e^5 +e^2\wedge e^6 -(\lambda+1)e^3\wedge e^4, \qquad  \lambda\ne 0; -1.$
\vspace{1mm}

In \cite{Mag-3} it is shown that on the given group there are several types of complex structures from which we will choose the ones that are compatible with $\omega$.

\vspace{1mm}
\textbf{First type: $\psi_{12}\ne \psi_{34}$.}\\
For any compatible complex structure of this type and its associated metric, the curvature tensor depends on three parameters $\psi_{11}$, $\psi_{33}$ and $\psi_{12}\ne 0$.
The elements of the tensor of curvature have very complicated expressions which also depend on the parameter $\lambda$.
We will reduce the expressions for the compatible complex structure $J_1$ and the corresponding metric $g_1$ using the zero parameters which do not appear in the curvature and adding the side condition: $\psi_{11}=0$, $\psi_{33}=0$.
\[
J_1(e_2)=\psi_{12}\, e_1,\qquad J_1(e_4)={\frac { \left( \lambda-1 \right)
\psi_{12}}{{\lambda\,\psi_{12}}^{2}-1}}\, e_3,\qquad J_1(e_6)=-{\frac {1}{\lambda\,\psi_{12}}}\, e_5.
\]
$$
g_1= \left[ \begin {array}{cccccc} 0&0&0&0&0&{\psi_{12}}^{-1}\\ \noalign{\medskip}0&0&0&0&\lambda\,\psi_{12}&0
\\ \noalign{\medskip}0&0&-{\frac { \left( \lambda+1 \right)  \left(
{\psi_{12}}^{2}\lambda-1 \right) }{ \left( \lambda-1 \right)
\psi_{12}}}&0&0&0\\ \noalign{\medskip}0&0&0&-{\frac {\psi_{12}\,
 \left( {\lambda}^{2}-1 \right) }{{\psi_{12}}^{2}\lambda-1}}&0&0
\\ \noalign{\medskip}0&\lambda\,\psi_{12}&0&0&0&0
\\ \noalign{\medskip}{\psi_{12}}^{-1}&0&0&0&0&0\end {array} \right]
$$

The curvature tensor has the following non-zero components:

$$
R_{1, 2, 2}^6  = -\frac{\lambda^3\psi_{12}^2+3\lambda^2\psi_{12}^2 -3\lambda - 1}{\lambda^2-1},\quad
R_{1, 2, 1}^5  = \frac{\lambda^3\psi_{12}^2+3\lambda^2\psi_{12}^2 -3\lambda - 1}{\lambda(\lambda^2-1)\psi_{12}^2}.
$$

After omitting the index, there remains one component:
$$
R_{1, 2, 1, 2}=\frac{\lambda^3\psi_{12}^2+3\lambda^2\psi_{12}^2 -3\lambda - 1}{(\lambda^2-1)\psi_{12}^2}.
$$

\textbf{Second type: $\psi_{12}=\psi_{34}$, $\psi_{11}=\psi_{33}=0$.}\\
The compatible complex structure of this type is of the form:
$$
J_2= \left[ \begin {array}{cccccc} 0&-1&0&0&0&0\\ \noalign{\medskip}1&0&0&0
&0&0\\ \noalign{\medskip}\psi_{31}&\psi_{41}&0&-1&0&0
\\ \noalign{\medskip}\psi_{41}&-\psi_{31}&1&0&0&0
\\ \noalign{\medskip}\psi_{51}&\psi_{52}&{\frac {\psi_{41}\,
 \left( \lambda+1 \right) }{\lambda}}&-{\frac {\psi_{31}\, \left(
\lambda+1 \right) }{\lambda}}&0&{\lambda}^{-1}\\
\noalign{\medskip}-
\lambda\,\psi_{52}&J_{61}&\psi_{31}
\,(\lambda+1)&\psi_{41}\,(\lambda+1)&-\lambda&0
\end {array} \right],
$$
where $J_{61}=\lambda\,\psi_{51}-({\psi_{41}}^{2}+{\psi_{31}}^{2})(\lambda
+1)$.

For any such complex structure and its associated metric, the curvature tensor does not depend on the parameters $\psi_{ij}$, and depends only on $\lambda$.
The non-zero elements of the tensor of curvature are:  $R_{1, 2, 2}^6  = -\frac{\lambda^2+4\lambda +1}{\lambda+1}$,\
$R_{1, 2, 1}^5  = \frac{\lambda^2+4\lambda +1}{\lambda(\lambda+1)}$.
After omitting the index, there remains one component: $R_{1, 2, 1, 2}=-\frac{\lambda^2+4\lambda +1}{\lambda+1}$.
From our point of view, it is possible to consider that all the remaining parameters are zero. Then we find the canonical expressions for the complex structure and the pseudo-Riemannian metric:
$$
J_{20}(e_1)=e_2,\quad J_{20}(e_3)=e_4,\quad J_{20}(e_5)=-\lambda\, e_6,
$$
$$
g_{20}=-2\,e^1\cdot e^6 -2\,\lambda\,e^2\cdot e^5 -(\lambda+1)\, (e^3)^2 -(\lambda+1)\, (e^4)^2.
$$

\textbf{Third type: $\psi_{34}=\psi_{12}$, $\psi_{33}\ne \psi_{11}$.}\\
For any compatible complex structure of this type and the associated metric, the curvature tensor does not depend on the parameters $\psi_{ij}$. Converting the free parameters to zero, we find the following canonical expressions for the complex structure and the associated pseudo-K\"{a}hler metric:
$$
J_{30}= \left[ \begin {array}{cccccc} 1&\sqrt {2}&0&0&0&0\\ \noalign{\medskip}-\sqrt {2}&-1&0&0&0&0\\ \noalign{\medskip}0&0&-{
\frac {\lambda+1}{\lambda-1}}&\sqrt {2}&0&0\\ \noalign{\medskip}0&0&-{
\frac { \left( {\lambda}^{2}+1 \right) \sqrt {2}}{ \left( \lambda-1
 \right) ^{2}}}&{\frac {\lambda+1}{\lambda-1}}&0&0
\\ \noalign{\medskip}0&0&0&0&-1&-{\frac {\sqrt {2}}{\lambda}}
\\ \noalign{\medskip}0&0&0&0&\sqrt {2}\lambda&1\end {array} \right],
$$
$$
g_{30}= \left[ \begin {array}{cccccc} 0&0&0&0&\lambda&\sqrt {2}\\ \noalign{\medskip}0&0&0&0&\sqrt {2}\lambda&1\\ \noalign{\medskip}0&0
&{\frac { \left( \lambda+1 \right)  \left( {\lambda}^{2}+1 \right)
\sqrt {2}}{ \left( \lambda-1 \right) ^{2}}}&-{\frac { \left( \lambda+1
 \right) ^{2}}{\lambda-1}}&0&0\\ \noalign{\medskip}0&0&-{\frac {
 \left( \lambda+1 \right) ^{2}}{\lambda-1}}& \left( \lambda+1 \right)
\sqrt {2}&0&0\\ \noalign{\medskip}\lambda&\sqrt {2}\lambda&0&0&0&0
\\ \noalign{\medskip}\sqrt {2}&1&0&0&0&0\end {array} \right].
$$
The curvature tensor has the following non-zero components:
\[
R_{1, 2, 2}^6  = -\frac{2(\lambda^4+4\lambda^3 -2\lambda^2 +4\lambda +1)}{(\lambda-1)^2(\lambda+1)},\quad
R_{1, 2, 1}^5  = \frac{2(\lambda^4+4\lambda^3 -2\lambda^2 +4\lambda +1)}{(\lambda+1)\lambda(\lambda-1)^2}.
\]
After omitting the index, there remains one component:
\[
R_{1, 2, 1, 2}=\frac{\sqrt{2}(\lambda^4+4\lambda^3 -2\lambda^2 +4\lambda +1)}{(\lambda-1)^2(\lambda+1)}.
\]


\section{Lie algebras of type $(2,6)$}

There are three Lie algebras of type $(2,6)$ admitting a pseudo-K\"{a}hler structure.

\subsection{The Lie group $G_{24}$}
The Lie algebra $\mathfrak{h}_{24}$ is defined by:

$[e_1,e_4] = e_6$, $[e_2,e_3] = e_5$.\\
This Lie algebra is a direct product two three-dimensional Lie algebras of Heisenberg: $\mathfrak{g}_{24} =\mathfrak{h}_3\times\mathfrak{h}_3$
The symplectic structure is:

$\omega = e^1\wedge e^6 +e^2\wedge e^5 +e^3\wedge e^4$.

For any compatible complex structure $J$ and its associated metric $g=\omega \circ J$, the curvature tensor depends on two parameters $\psi_{11}\ne 0$ and $\psi_{12}\ne 0$.
Therefore the semicanonical pseudo-K\"{a}hler structure is as follows:

$J(e_1) = \psi_{11}\, e_1 -\frac {1+\psi_{11}^{2}}{\psi_{12}}\, e_2,\quad J(e_2) = \psi_{12}\, e_1 -\psi_{11}\, e_2$,

$J(e_3) = {\frac {{\psi_{11}}^{2}-1}{2\psi_{11}}}\, e_3 -{\frac {2\,{\psi_{11}}^{2}+ {\psi_{11}}^{4}+1}{2\psi_{11}\,{\psi_{12}}^{2}}}\, e_4,\quad J(e_4) = \frac {\psi_{12}^{2}}{2\psi_{11}}\, e_3 -\frac {\psi_{11}^{2}-1}{2\,\psi_{11}}\, e_4$,

$J(e_5) = \psi_{11}\, e_5 +\frac {1+\psi_{11}^{2}}{\psi_{12}}\, e_6,\quad
J(e_6) = -\psi_{12}\, e_5 -\psi_{11}\, e_6.$

$$
g= \left[ \begin {array}{cccccc}
0&0&0&0&{\frac {1+{\psi_{11}}^{2}}{\psi_{12}}}&-\psi_{11}\\
\noalign{\medskip}0&0&0&0&\psi_{11}&-\psi_{12}\\
\noalign{\medskip}0&0&-{\frac {2\,{\psi_{11}}^{2}+
{\psi_{11}}^{4}+1}{2\,{\psi_{12}}^{2}\psi_{11}}}&-{\frac {{
\psi_{11}}^{2}-1}{2\,\psi_{11}}}&0&0\\
\noalign{\medskip}0&0&-{\frac {{\psi_{11}}^{2}-1}{2\,\psi_{11}}}&-{\frac {{\psi_{12}}^{2}}{2\,\psi_{11}}}&0&0\\
\noalign{\medskip}{\frac {1+{\psi_{11}}^{2}}{\psi_{12}}}&\psi_{11}&0&0&0&0\\
\noalign{\medskip}-\psi_{11}&-\psi_{12}&0&0&0&0\end {array} \right].
$$

For any compatible complex structure and its associated metric, the curvature tensor depends on two parameters $\psi_{11}\ne 0$ è $\psi_{12}\ne 0$: $R_{1, 2, 2}^6 = -\psi_{11}^2$,\
$R_{1, 2, 1}^6 =-\frac{(1 +\psi_{11}^2)\psi_{11}}{\psi_{12}}$,\
$R_{1, 2, 2}^5 =-\psi_{12} \psi_{11}$,\
$R_{1, 2, 1}^5 = -\psi_{11}^2$.\
On omitting the index of the curvature tensor, it turns out that there is only one (within symmetries) non-zero component of the tensor of curvature
$R_{1, 2, 1, 2} =  \psi_{11}$.

\subsection{The Lie group $G_{17}$}
The Lie algebra $\mathfrak{h}_{17}$ is defined by:

$[e^1,e^3] = e^5$, $[e^1,e^4] = e^6$, $[e^2,e^3] = e^6$,\\
Its symplectic structure comes from the list in \cite{Goze-Khakim-Med}:

$\omega = e^1\wedge e^6 + e^2\wedge e^5 + e^3\wedge e^4$

For any compatible complex structure and its associated metric, the curvature tensor depends on two parameters $\psi_{11}$ and $\psi_{12}\ne 0$:

Setting the remaining free parameters $\psi_{ij}$ to zero, we find the canonical complex structure $J$ and associated metric $g=\omega \circ J$:

$J(e_1) = \psi_{11}\, e_1 -{\frac {1+{\psi_{11}}^{2}}{\psi_{12}}}\, e_2,\qquad
J(e_2) = \psi_{12}\, e_1 -\psi_{11}\, e_2$,\

$J(e_3) = \psi_{11}\,e_3 +2\,{\frac {1+{\psi_{11}}^{2}}{\psi_{12}}}\, e_4,\qquad J(e_4) = -\frac {\psi_{12}}{2}\, e_3 -\psi_{11}\, e_4$,\

$J(e_5) = \psi_{11}\, e_5 +{\frac {1+{\psi_{11}}^{2}}{\psi_{12}}}\, e_6,\qquad
J(e_6) = -\psi_{12}\, e_5 -\psi_{11}\, e_6$.
$$
g=\left[ \begin {array}{cccccc} 0&0&0&0&{\frac {1+{\psi_{11}}^{2}}{
\psi_{12}}}&-\psi_{11}\\
\noalign{\medskip}0&0&0&0&\psi_{11}&-\psi_{12}\\
\noalign{\medskip}0&0&2\,{\frac {1+{\psi_{11}}^{2}}{\psi_{12}}}&-\psi_{11}&0&0\\ \noalign{\medskip}0&0&-\psi_{11}&\psi_{12}/2&0&0\\
\noalign{\medskip}{\frac {1+{\psi_{11}}^{2}}{\psi_{12}}}&\psi_{11}&0&0&0&0\\ \noalign{\medskip}-\psi_{11}&-\psi_{12}&0&0&0&0\end {array} \right]
$$
The curvature tensor has the following non-zero components:
$R_{1, 2, 1}^5 = \psi_{11}\psi_{12}$,\
$R_{1, 2, 1}^6 = 1+\psi_{11}^2$,\
$R_{1, 2, 2}^5 = \psi_{12}^2$,\
$R_{1, 2, 2}^6 = \psi_{11}\psi_{12} $.
After omitting the index, there remains one component: $R_{1, 2, 1, 2} = -\psi_{12}$.

\subsection{The Lie group $G_{16}$}
The Lie algebra $\mathfrak{g}_{16}$ is defined by: $[X_1,X_2] = X_5$, $[X_1,X_3] = X_6$, $[X_2,X_4] = X_6$, $[X_3,X_4] =-X_5$. It is complex Heisenberg Lie algebra. The algebra $\mathfrak{g}_{16}$ has (see \cite{Goze-Khakim-Med}) two symplectic structures which are noted using the dual base $\alpha^i$ as follows:

$\omega_1 = \alpha^1\wedge \alpha^6 +\alpha^2\wedge \alpha^3 -\alpha^4\wedge \alpha^5$,

$\omega_2 = \alpha^1\wedge \alpha^6 -\alpha^2\wedge \alpha^3 +\alpha^4\wedge \alpha^5$.\\
Left-invariant complex structures on this group are discovered in an explicit form in the work of Magnin \cite{Mag-3} (algebra $M5$). We will use Magnin's results, so we will make the replacement: $X_1=e_1$, $X_2=e_3$, $X_3=e_4$, $X_4=e_2$, $X_5=e_5$, $X_6=e_6$. Then the non-trivial brackets are given in \cite{Mag-3}:

$[e_1,e_3] = e_5$, $[e_1,e_4] = e_6$, $[e_2,e_3] =-e_6$, $[e_2,e_4] =e_5$.\\
The symplectic forms become:

$\omega_1 = e^1\wedge e^6 +e^3\wedge e^4 -e^2\wedge e^5$,

$\omega_2 = e^1\wedge e^6 -e^3\wedge e^4 +e^2\wedge e^5$.

Let $z_1=\frac{1}{2}(e_1+ie_2)$, $z_2=\frac{1}{2}(e_3-ie_4)$, $z_3=\frac{1}{2}(e_5-ie_6)$, then Lie algebra $\mathfrak{g}_{16}$ is defined by: $[z_1,z_2]=z_3$.
If $J_0$ is complex structure of the complex Lie algebra $M5$, then: $J_0(e_1)=-e_2$, $J_0(e_3)=e_4$, $J_0(e_5)=e_6$.
The complex structure $J_0$ is not $\omega_1$-compatible, but compatible with $\omega_2$. Thus, the metric $g_0$ of pseudo-K\"{a}hler structure $(J_0,\omega_2,g_0)$ is as follows:
$$
g_{0}=2\,e^1\cdot e^6 -2\,e^2\cdot e^5 -\, (e^3)^2 -\, (e^4)^2.
$$
\noindent
The curvature tensor has the following non-zero components:
$R_{1, 2, 1}^6 =1$,\ $R_{1, 2, 2}^5 =1$,\ $R_{1, 2, 1, 2} =-1$.

 \vspace{1.5mm}
\textbf{First case.} The symplectic structure is:
$\omega_1 =  e^1\wedge e^6 -e^2\wedge e^5 +e^3\wedge e^4$.

For any compatible complex structure $J$, the curvature tensor of the associated metric $g=\omega_1 \circ J$ depends on two parameters $\psi_{11}$ and $\psi_{12}\ne 0$.
Setting the remaining free parameters $\psi_{ij}$ to zero, we find the canonical complex structure $J_1$ and pseudo-K\"{a}hler metric $g_1=\omega_1 \circ J_1$:

$J_1(e_1) = \psi_{11}\, e_1 -{\frac {1+{\psi_{11}}^{2}}{\psi_{12}}}\, e_2,\qquad
J_1(e_2) = \psi_{12}\, e_1 -\psi_{11}\, e_2$,\

$J_1(e_3) = -{\frac {\psi_{11}\, (1+{\psi_{11}}^{2}-{\psi_{12}}^{2})}{ {\psi_{12}}^{2}+1+{\psi_{11}}^{2}}}\, e_3 -{\frac {2( 1+{\psi_{11}}^{2}) \psi_{12}}{{\psi_{12}}^{2}+1+{\psi_{11}}^{2}}}\, e_4$,\

$J_1(e_4) = {\frac {2\,{\psi_{11}}^{2} +{\psi_{11}}^{4}-2\,{\psi_{11}}^{2}{\psi_{12}}^{2} +{\psi_{12}}^{4}+2
\,{\psi_{12}}^{2}+1}{2\psi_{12}\, ({\psi_{12}}^{2} +1
+{\psi_{11}}^{2}) }}\, e_3 +{\frac {\psi_{11}\, (1+{\psi_{11}}^{2}-{\psi_{12}}^{2}) }{{\psi_{12}}^{2}+1+{\psi_{11}}^{2}}}\, e_4$,\

$J_1(e_5) = \psi_{11}\, e_5 -{\frac {1+{\psi_{11}}^{2}}{\psi_{12}}}\, e_6,\qquad
J_1(e_6) = \psi_{12}\, e_5 -\psi_{11}\, e_6$.
 \vspace{1.5mm}\\
The curvature tensor has the following non-zero components:
\[
R_{1, 2, 1}^5 =\frac{\psi_{11}(1+\psi_{11}^2+\psi_{12}^2)}{\psi_{12}},\quad
R_{1, 2, 2}^5=1+\psi_{11}^2+\psi_{12}^2,
\]
\[
R_{1, 2, 1}^6 =-\frac{(1+\psi_{11}^2)(1+\psi_{11}^2+\psi_{12}^2)}{\psi_{12}^2},\quad
R_{1, 2, 2}^6=-\frac{\psi_{11}(1+\psi_{11}^2+\psi_{12}^2)}{\psi_{12}}.
\]
After omitting the index, there remains one component:
\[
R_{1, 2, 1, 2} =\frac{1+\psi_{11}^2+\psi_{12}^2}{\psi_{12}}.
\]

 \vspace{1.5mm}
\textbf{Second case.} The symplectic structure is:
$\omega_2 =  e^1\wedge e^6 +e^2\wedge e^5 -e^3\wedge e^4$.
For any compatible complex structure $J$, the curvature tensor of the associated metric $g=\omega_2 \circ J$ depends on two parameters $\psi_{34}$ and $\psi_{12}=\pm 1$. Let $\psi_{12}=1$.
Setting the remaining free parameters $\psi_{ij}$ to zero, we find the canonical complex structure $J_2$ and pseudo-K\"{a}hler metric $g_2=\omega_2 \circ J_2$:
$$
J_2(e_2) = e_1,\quad J_2(e_4) = \psi_{34}\, e_3,\quad
J_2(e_6) = -e_5,
$$
$$
g_{2}=2\,e^1\cdot e^5 -2\,e^2\cdot e^6 +\frac{1}{\psi_{34}}\, (e^3)^2 +\psi_{34}\, (e^4)^2.
$$

The curvature tensor has the following non-zero components:
$R_{1, 2, 1}^6 = -{\rm sign}(\psi_{12})\psi_{34}$,\
$R_{1, 2, 2}^5 = -{\rm sign}(\psi_{12})\psi_{34}$. After omitting the index, there remains one component:  $R_{1, 2,1, 2} = {\rm sign}(\psi_{12})\psi_{34}$.

The complex structure $J_2$ is biinvariant (then $G_{16}$ is complex Lie group) if and only if $\psi_{34} =-1$, that is $J_2=J_0$.

\section{Lie algebras of type $(4,6)$}
In this class there is only one algebra.

\subsection{The Lie group $G_{25}$}
The Lie algebra $\mathfrak{h}_{25}$ is defined by $[e_1,e_2] =e_3$.
This Lie algebra is a direct product of a three-dimensional nilpotent Lie algebra of Heisenberg $\mathfrak{h}_3$ and $\mathbb{R}^3$.
The symplectic structure is:

$\omega=e^1\wedge e^3+e^2\wedge e^4+e^5\wedge e^6$

In this case there is an 8-parametrical set of the compatible complex structures and pseudo-K\"{a}hler metrics. All the metrics are flat. Therefore we will specify only the simplest expressions, without parameters:
$$
J(e_1) = e_2,\quad J(e_3) = e_4,\quad J(e_5) = e_6.
$$
$$
g=\left[ \begin {array}{cccccc}
0&0&0&-1&0&0\\
0&0&1&0&0&0\\
0&1&0&0&0&0\\
-1&0&0&0&0&0\\
0&0&0&0&1&0\\
0&0&0&0&0&1\end {array} \right].
$$

\section{Lie algebras of type $(3,6)$}
There are two Lie algebras of type (3,6) admitting a pseudo-K\"{a}hler structure.

\subsection{The Lie group $G_{18}$}
The Lie algebra $\mathfrak{h}_{18}$ is defined by:
$[e_{1},e_{2}] =e_{4}$, $[e_{1},e_{3}]=e_{5}$, $[e_{2},e_{3}] =e_{6}$.
This Lie algebra has three symplectic structures \cite{Goze-Khakim-Med}:

$\omega _{1}(\lambda )=e^{1}\wedge e^{6}+\lambda e^{2}\wedge e^{5}+\left( \lambda -1\right) e^{3}\wedge e^{4}$, $\quad \lambda \ne 0,\,1$,

$\omega _{2}(\lambda ) = e^{1}\wedge e^{5}{+}\lambda e^{1}\wedge e^{6}{-}\lambda e^{2}\wedge e^{5}{+}e^{2}\wedge e^{6}{-}2\lambda e^{3}\wedge e^{4}$,
$\quad \lambda \ne 0$,

$\omega _{3}= -e^{1}\wedge e^{6} +e^{2}\wedge e^{5}+2e^{3}\wedge e^{4}+e^{3}\wedge e^{5}$.\\
Left-invariant complex structures on this group are discovered in an explicit form in the work of Magnin \cite{Mag-3} (algebra $M3$).

\textbf{First case.}  The symplectic structure is:
$\omega_1 =  e^1\wedge e^6 +\lambda e^2\wedge e^5 +(\lambda-1) e^3\wedge e^4$.
The compatible complex structures exist only at $\lambda =-1$, therefore

$\omega_1 =  e^1\wedge e^6 - e^2\wedge e^5 -2\, e^3\wedge e^4$.\\
For any compatible complex structure and its associated metric, the curvature tensor depends on three parameters $\psi_{11}$, $\psi_{12}$ and $\psi_{34}$. The curvature tensor has the following non-zero components:
$R_{1, 2, 1}^6 =-\frac{2\psi_{34}(1 + \psi_{11}^2 )}{\psi_{12}}$,\
$R_{1, 2, 2}^6 =-2\psi_{11} \psi_{34}$,\
$R_{1, 2, 2}^5~=~2\psi_{12} \psi_{34}$,\
$R_{1, 2, 1}^5 =2\psi_{11} \psi_{34}$.
After omitting the index, there remains one component: $R_{1, 2, 1, 2} =2\psi_{34}$.
Setting the remaining free parameters $\psi_{ij}$ to zero, we find the canonical complex structure $J_1$ and the pseudo-K\"{a}hler metric $g_1 =\omega_1 \circ J_1$:

$J_1(e_1) = \psi_{11}\, e_1 -{\frac {1+{\psi_{11}}^{2}}{\psi_{12}}}\, e_2,\qquad J_1(e_2) = \psi_{12}\, e_1 -\psi_{11}\, e_2$,\

$J_1(e_4) =  -{\frac {1}{\psi_{34}}}\, e_4,\qquad J_1(e_4) = \psi_{34}\, e_3$,\

$J_1(e_5) = \psi_{11}\, e_5 -{\frac {1+{\psi_{11}}^{2}}{\psi_{12}}}\, e_6,\qquad J_1(e_6) = \psi_{12}\, e_5 -\psi_{11}\, e_6$.
$$
g_1=  \left[ \begin {array}{cccccc}
0&0&0&0&-{\frac {1+{\psi_{11}}^{2}}{\psi_{12}}}&-\psi_{11}\\
 \noalign{\medskip}0&0&0&0&-\psi_{11}&-\psi_{12}\\
 \noalign{\medskip}0&0&{\frac {2}{\psi_{34}}}&0&0&0\\
 \noalign{\medskip}0&0&0&2\,\psi_{34}&0&0\\
 \noalign{\medskip}-{\frac {1+{\psi_{11}}^{2}}{\psi_{12}}}&-\psi_{11}&0&0&0&0\\ \noalign{\medskip}-\psi_{11}&-\psi_{12}&0&0&0&0\end {array} \right]
$$


\textbf{Second case.}  The symplectic structure is: $\omega_2 =  e^1\wedge e^5 +e^2\wedge e^6 +\lambda e^1\wedge e^6 -\lambda e^2\wedge e^5 -\\ -2\lambda e^3\wedge e^4$.
The compatible complex structures exist only for a case $\psi_{16}=0$, $\psi_{25}=0$ and $\psi_{12}^2=1$. We take a case $\psi_{12}=1$.
For any compatible complex structure and its associated metric, the curvature tensor depends on one parameter $\psi_{34}$.
The curvature tensor has the following non-zero components:
$R_{1, 2, 2}^6 =-\frac{2 \lambda \psi_{34}}{\lambda^2 +1}$,\
$R_{1, 2, 1}^6 = -\frac{2 \lambda^2 \psi_{34}}{\lambda^2 +1}$,\
$R_{1, 2, 1}^5 = -\frac{2 \lambda \psi_{34}}{\lambda^2 +1}$,\
$R_{1, 2, 2}^5 =\frac{2 \lambda^2 \psi_{34}}{\lambda^2 +1}$.
After omitting the index, there remains one component:
$R_{1, 2, 1, 2} = 2\lambda \psi_{34}$.\
Setting the remaining free parameters $\psi_{ij}$ to zero, we find the canonical complex structure $J_2$ and the pseudo-K\"{a}hler metric $g_2 =\omega_2 \circ J_2$:
$$
J_2(e_2) =e_1,\quad
J_2(e_4) = \psi_{34}\, e_3,\quad
J_2(e_6) = e_5,
$$
$$
g_2=\left[ \begin {array}{cccccc} 0&0&0&0&-\lambda&1\\
0&0&0&0&-1&-\lambda\\
0&0&\frac {2\,\lambda}{\psi_{34}}&0&0&0\\
0&0&0&2\,\lambda\,\psi_{34}&0&0\\
-\lambda&-1&0&0&0&0\\
1&-\lambda&0&0&0&0\end {array} \right].
$$


\textbf{Third case.}  The symplectic structure is: \\
\centerline{$\omega_3 =  -e^1\wedge e^6 +e^2\wedge e^5 +2e^3\wedge e^4 +e^3\wedge e^5 $.}

For any compatible complex structure and its associated metric, the curvature tensor depends on two parameters $\psi_{25}\ne 0$ and $\psi_{46}\ne 0$. The curvature tensor has the following non-zero components:
$R_{1, 2, 1}^6 = -\frac{6 \psi_{25}}{\psi_{46}}$,\
$R_{1, 2, 2}^5 =  54 \psi_{46} \psi_{25}$,\
$R_{1, 2, 3}^5 = 18 \psi_{46} \psi_{25}$,\
$R_{1, 2, 3}^4 = -6 \psi_{46} \psi_{25}$,\
$R_{1, 3, 1}^6 =-\frac{2 \psi_{25}}{\psi_{46}}$,\
$R_{1, 3, 2}^5 = 18 \psi_{46} \psi_{25}$,\
$R_{1, 2, 2}^4 =-18 \psi_{46} \psi_{25}$,\
$R_{1, 3, 2}^4 = -6 \psi_{46} \psi_{25}$,\
$R_{1, 3, 3}^5 = 6 \psi_{46} \psi_{25}$,\
$R_{1, 3, 3}^4 = -2 \psi_{46} \psi_{25}$.

After omitting the index, there remains three components:
$R_{1, 2, 1, 3} = 6 \psi_{25}$,\
$R_{1, 3, 1, 3} = 2 \psi_{25}$,\
$R_{1, 2, 1, 2} = 18 \psi_{25}$.
Setting the remaining free parameters $\psi_{ij}$ to zero, we find the canonical complex structure $J_3$ and the pseudo-K\"{a}hler metric $g_3 =\omega_2 \circ J_3$:
$$
J_3= \left[ \begin {array}{cccccc} 0&-3\,\psi_{46}&-\psi_{46}&0&0&0\\ \noalign{\medskip}0&0&0&3\,\psi_{25}&\psi_{25}&0\\
\noalign{\medskip}{\psi_{46}}^{-1}&0&0&-9\,\psi_{25}&-3\,\psi_{25}&0\\ \noalign{\medskip}0&-{\psi_{25}}^{-1}&0&0&0&\psi_{46} \\ \noalign{\medskip}0&2\,{\psi_{25}}^{-1}&0&0&0&-3\,\psi_{46}\\ \noalign{\medskip}0&0&0&2\,{\psi_{46}}^{-1}&{\psi_{46}}^{-1}&0
\end {array} \right],
$$
$$
g_{3}=\left[ \begin {array}{cccccc}
0&0&0&-2\,{\psi_{46}}^{-1}&-{\psi_{46}}^{-1}&0\\
 \noalign{\medskip}0&2\,{\psi_{25}}^{-1}&0&0&0&-3\,\psi_{46}\\
 \noalign{\medskip}0&0&0&0&0&-\psi_{46}\\
 \noalign{\medskip}-2\,{\psi_{46}}^{-1}&0&0&18\,\psi_{25}&6\,\psi_{25}&0\\
 \noalign{\medskip}-{\psi_{46}}^{-1}&0&0&6\,\psi_{25}&2\,\psi_{25}&0\\
 \noalign{\medskip}0&-3\,\psi_{46}&-\psi_{46}&0
&0&0\end {array} \right].
$$

\subsection{The Lie group $G_{23}$}
The Lie algebra $\mathfrak{h}_{23}$ is defined by: $[e_1,e_2] = e_5$, $[e_1,e_3] = e_6$.  There are, according to \cite{Goze-Khakim-Med}, three different symplectic structures:

$\omega_1 =  e^1\wedge e^6 +e^2\wedge e^5 +e^3\wedge e^4$,\

$\omega_2 =  e^1\wedge e^4 +e^2\wedge e^6 +e^3\wedge e^5$ and

$\omega_3 =  e^1\wedge e^4 +e^2\wedge e^6 - e^3\wedge e^5$.

For the first two symplectic structures there are no compatible complex structures. For the third symplectic structure $\omega_3$, there is a set of compatible complex structures that depend on several parameters. It will be convenient to enumerate basis vectors as follows: $e_2:=e_1$, $e_3:=-e_2$, $e_1:=e_3$, then

$[e_1,e_3] = -e_5$,\ $[e_2,e_3] = e_6$\\
and

$\omega_3 =  e^1\wedge e^6 + e^2\wedge e^5+ e^3\wedge e^4.$

It is easy to see that the given Lie algebra comes from the $\mathbb{R}^4= \mathbb{R}\{e_1, e_2, e_5, e_6\}$ semidirect product with $\mathbb{R} e_3$ and then a direct product with $\mathbb{R}e_4$,
$\mathfrak{g}_{23}=\mathbb{R}^4 \rtimes \mathbb{R} e_3\times \mathbb{R} e_4$.

The set of complex structures whose parameters influence the curvature operates on the invariant 2-planes $\{e_1, e_2\}$, $\{e_3, e_4\}$ è $\{e_5, e_6\}$  as follows:
$$
J(e_2) = \psi_{12}\, e_1 -\psi_{11}\, e_2,\quad
J(e_4) = \psi_{34}\, e_3,\quad
J(e_6) = -\psi_{12}\, e_5 -\psi_{11}\, e_6.
$$

The curvature tensor depends on three parameters and has the following non-zero components:
$R_{1, 2, 1}^6 =\frac{\psi_{34} (1 + \psi_{11}^2 )}{\psi_{12}}$,\
$R_{1, 2, 1}^5 = \psi_{11} \psi_{34}$,\
$R_{1, 2, 2}^5~=~\psi_{12} \psi_{34}$.
After omitting the index, there remains one component
$R_{1, 2, 1, 2} =-\psi_{34}$.
Supposing $\psi_{34}=-a$, $\psi_{12}=1$, $\psi_{11}=0$ and $\psi_{33}=0$, we find the canonical pseudo-K\"{a}hler structure of curvature $R_{1, 2, 1, 2} =a$:
$$
J(e_2) = e_1,\quad J(e_4) = -a\, e_3 ,\quad J(e_6) = -e_5.
$$
$$
g=\left[ \begin {array}{cccccc}
0&0&0&0&1&0\\
0&0&0&0&0&-1\\
0&0&{a}^{-1}&0&0&0\\
0&0&0&a&0&0\\
1&0&0&0&0&0\\
0&-1&0&0&0&0
\end {array} \right].
$$

\section{Formulas for evaluations}
We now present the formulas which were used for the evaluations (on Maple) of the Nijenhuis tensor and curvature tensor of the associated metrics. Let $e_1,\ldots,e_{2n}$ be a basis of the Lie algebra $\mathfrak g$ and $C_{ij}^k$ a structure constant of the Lie algebra in this base:
\begin{equation}
[e_i,e_j]=\sum_{k=1}^{2n}C_{ij}^{k}e_k,  \label{strukt}
\end{equation}

\textbf{1. Nijenhuis tensor.}
Let $J^{k}_{i}$ be a matrix of a left-invariant almost complex structure $J$ in basis $\{e_i\}$,\ $Je_i=J^{k}_{i}e_k$. The Nijenhuis tensor is defined by formula (\ref{Nij1}). For the basis vectors we obtain: $N(e_i,e_j) = N_{ij}^k e_k$,
$$
N(e_i,e_j)= [Je_i,Je_j] -[e_i,e_j] -J[Je_i,e_j] -J[e_i,Je_j]=
$$
$$
=\left(J_i^l J_j^m C_{lm}^k  -J_i^l J_m^k C_{lj}^m   -J_j^l J_m^k C_{il}^m  -C_{ij}^k \right)\,e_k,
$$
\begin{equation}
N_{ij}^k =J_i^l J_j^m C_{lm}^k  -J_i^l J_m^k C_{lj}^m   -J_j^l J_m^k C_{il}^m  -C_{ij}^k.
\end{equation}

\textbf{2. Compatible condition.}
This is the condition that $\omega(JX,Y) + \omega(X, JY) =0$, $\forall \, X, Y\in \g $. For the basis vectors we have:
$\omega (J (e_i), e_j) + \omega (e_i, J (e_j)) =0$, \quad
$\omega (J^k_i e_k, e_j) + \omega (e_i, J^s_j e_s) =0$.
\begin{equation}
\omega _ {k, j} J^k_i + \omega _ {i, s} J^s_j=0.
\end{equation}

\textbf{3. Connection components.} These are the components $\Gamma_{ij}^{k}$ in the formula
$\nabla _ {e _ {i}} e_j =\Gamma_{ij}^{k}e_k.$
For left-invariant vector fields we have: $2g ({\nabla}_{X} Y, Z) =g ([X, Y], Z) +g ([Z, X], Y)-g ([Y, Z], X)$. For the basis vectors we have:
$$
2g (\nabla _ {e _ {i}} e_j, e_k) =g ([e_i, e_j], e_k) +g ([e_k, e_i], e_j) +g (e_i, [e_k, e_j]),
$$
$$
2g _ {lk} \Gamma _ {ij} ^ {l} =g _ {pk} C _ {ij} ^ {p} +g _ {pj} C _ {ki} ^ {p} +g _ {ip} C _ {kj} ^ {p},
$$
\begin{equation}
\Gamma_{ij}^{n}=\frac{1}{2}g^{kn}\left(g_{pk}C_{ij}^{p}+g_{pj}C_{ki}^{p} +g_{ip}C_{kj}^{p}\right).
\end{equation}

\textbf{4. Curvature tensor.}
The formula is: $R(X, Y)Z =\nabla_{X} \nabla_{Y} Z -\nabla_{Y}\nabla_{X}Z -\nabla_{[X,Y]}Z$.
For the basis vectors we have: $R (e_i, e_j) e_k=R _ {ijk} ^s e_s $,
$$
R(e_i,e_j)e_k=\nabla_{e_{i}}\nabla_{e_{j}}e_{k}-\nabla_{e_{j}}\nabla_{e_{i}}e_{k} -\nabla_{[e_{i},e_{j}]}e_{k}.
$$
Therefore:
\begin{equation}
R_{ijk}^{s}=\Gamma_{ip}^{s}\Gamma_{jk}^{p}-\Gamma_{jp}^{s}\Gamma_{ik}^{p} -C_{ij}^{p}\Gamma_{pk}^{s}.
\end{equation}

\end{document}